\newcommand{\Z}{\ensuremath{\mathbb{Z}}}
\newcommand{\Q}{\ensuremath{\mathbb{Q}}}
\newcommand{\F}{\ensuremath{\mathbb{F}}}
\theoremstyle{plain}
\newtheorem{theorem}{Theorem}[section]
\newtheorem{lemma}[theorem]{Lemma}
\newtheorem{proposition}[theorem]{Proposition}
\theoremstyle{definition}
\newtheorem{remark}[theorem]{Remark}
\DeclareMathOperator{\Spec}{Spec}
\DeclareMathOperator{\Pic}{Pic}
\DeclareMathOperator{\divisor}{div}
\DeclareMathOperator{\Div}{Div}
\DeclareMathOperator{\Gal}{Gal}
\DeclareMathOperator{\Res}{Res}
\DeclareMathOperator{\Jac}{Jac}
\DeclareMathOperator{\Hom}{Hom}
\DeclareMathOperator{\End}{End}
\DeclareMathOperator{\rk}{rk}
\DeclareMathOperator{\car}{char}
\DeclareMathOperator{\GL}{GL}
\DeclareMathOperator{\SL}{SL}
\DeclareMathOperator{\SP}{Sp} 
\DeclareMathOperator{\GSP}{GSp} 
\DeclareMathOperator{\NS}{NS} 
\newcommand{\A}{\mathcal{A}}
\newcommand{\B}{B}
\newcommand{\Bbar}{\overline{B}}
\newcommand{\Gm}{\mathbf{G}_{{\rm m}}}
\newcommand{\X}{\mathcal{X}}
\newcommand{\J}{\mathcal{J}}
\newcommand{\kbar}{\bar{k}}
\newcommand{\Kbar}{\bar{K}}
\newcommand{\Kell}{K_\ell}
\begin{document}

\title{Arithmetic rank bounds for abelian varieties over function fields}

\author{F\'elix Baril Boudreau}
\address{Universit\'e du Luxembourg, D\'epartement de Math\'ematiques, Maison du nombre, 6, avenue de la Fonte, Esch-sur-Alzette, 4364, Luxembourg}
\email{felix.barilboudreau@uni.lu}

\author{Jean Gillibert}
\address{Universit\'e de Toulouse, Institut de Math{\'e}matiques de Toulouse, CNRS UMR 5219, 118 route de Narbonne, 31062 Toulouse Cedex 9, France}
\email{jean.gillibert@math.univ-toulouse.fr}

\author{Aaron Levin}
\address{Department of Mathematics, Michigan State University, 619 Red Cedar Road, East Lansing, MI 48824}
\email{adlevin@math.msu.edu}

\keywords{Abelian varieties, Mordell-Weil group, rank, descent, {\'e}tale cohomology, function fields, Grothendieck-Ogg-Shafarevich formula, integral points, hyperelliptic curves, Jacobians.}

\subjclass[2020]{11G10, 14D10 (Primary) 14G25, 14H40, 14K15 (Secondary)}

\date{September 2025}

\begin{abstract}
It follows from the Grothendieck-Ogg-Shafarevich formula that the rank of an abelian variety (with trivial trace) defined over the function field of a curve is bounded by a quantity which depends on the genus of the base curve and on bad reduction data. Using a function field version of classical $\ell$-descent techniques, we derive an arithmetic refinement of this bound, extending previous work of the second and third authors from elliptic curves to abelian varieties, and improving on their result in the case of elliptic curves. When the abelian variety is the Jacobian of a hyperelliptic curve, we produce a more explicit $2$-descent map.
Then we apply this machinery to studying points on the Jacobians of certain genus $2$ curves over $k(t)$, where $k$ is some perfect base field of characteristic not $2$.
\end{abstract}

\maketitle


\section{Introduction}

It is known since the works of Ogg \cite{ogg62} and Shafarevich \cite{shaf61} (under tameness assumptions), followed by Grothendieck \cite{raynaud95}, that the rank of a given abelian variety over the function field of a curve is bounded by a quantity which depends on the genus of the base curve and on reduction data. More precisely, let $k$ be a perfect field, let $\B$ be a smooth projective geometrically integral $k$-curve of genus $g_\B$, and let $A$ be an abelian variety of dimension $d_A$ over $k(\B)$ whose $k(\B)/k$-trace vanishes
(the trace is, roughly speaking, the largest abelian subvariety of $A$ which is defined over $k$, see \cite[Section~7]{conrad06} for a modern exposition). Then, as a consequence of the Grothendieck-Ogg-Shafarevich formula applied to the $\ell$-torsion subgroup of the N{\'e}ron model of $A$ over the base curve $\B$, we have the inequality
\begin{equation}
	\label{eq:geometricbound}
	\rk_\Z A(k(\B)) \leq 2d_A \cdot (2g_\B-2) + \deg(\mathfrak{f}_A),
\end{equation}
where $\mathfrak{f}_A$ is the conductor of $A$, which is a divisor on $\B$ (see \S{}\ref{subsec:comparison} for details).

Note that the right-hand side of \eqref{eq:geometricbound} does not change if $k$ is replaced by its algebraic closure $\kbar$.
Following the terminology introduced by Ulmer \cite{ulmer04}, we call \eqref{eq:geometricbound} the \emph{geometric rank bound}, and any rank bound that is sensitive to the base field is said to be an \emph{arithmetic rank bound}.

When $A$ is an elliptic curve and $k$ is a finite field, an arithmetic rank bound depending on the cardinality of $k$ was given by Brumer \cite[Proposition~6.9]{Brumer92}; it improves on the geometric one provided $|k|< \deg(\mathfrak{f}_A)^2$. In \cite{gl22} we obtained, for elliptic curves satisfying a mild assumption, an arithmetic refinement of the geometric bound, giving an answer to a question raised by Ulmer \cite[Section 9]{ulmer04}.
The aim of this paper is to prove the existence of similar arithmetic bounds for higher dimensional abelian varieties, and to explore various consequences of these results.

Our main result is the following.

\begin{theorem}
\label{thm:main}
Let $k$ be a perfect field and let $\B$ be a smooth projective geometrically integral $k$-curve.
Let $A$ be an abelian variety over $k(\B)$ whose $k(\B)/k$-trace vanishes, and let $\ell$ be a prime number invertible in $k$.

(i) We have
\begin{equation}
\label{eq:arithmeticbound}
\rk_\Z A(k(\B)) \leq \dim_{\F_\ell} H^1(\B,\A[\ell]) + \dim_{\F_\ell} H^0(\B,\Phi/\ell\Phi),
\end{equation}
where $\A[\ell]$ denotes the $\ell$-torsion subgroup of the N\'eron model $\A\to \B$ of $A$, $\Phi$ denotes its group of components $\A/\A^0$ and $\Phi/\ell\Phi$ denotes the cokernel of multiplication by $\ell$ on $\Phi$. The cohomology groups are computed in the {\'e}tale topology.

(ii) If in addition both $A$ and its dual abelian variety $A^t$ have no nonzero rational $\ell$-torsion points over $\kbar(\B)$, then this bound is Galois equivariant in the sense that the cohomology groups on the right-hand side are the $\Gal(\kbar/k)$-invariants of their counterparts computed over $\kbar$, and it constitutes a refinement of \eqref{eq:geometricbound} in the sense that, when $k=\kbar$, it coincides with the geometric rank bound $2d_A \cdot (2g_\B-2) + \deg(\mathfrak{f}_A)$.
\end{theorem}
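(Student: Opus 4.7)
The plan is to prove (i) by a sheaf-theoretic $\ell$-descent on the N\'eron model and (ii) by combining the Hochschild-Serre spectral sequence with the Grothendieck-Ogg-Shafarevich Euler-Poincar\'e formula. For (i), the N\'eron mapping property identifies $A(k(\B)) = \A(\B)$, and since $\rk_\Z M \leq \dim_{\F_\ell} M/\ell M$ for any finitely generated abelian group, it suffices to bound $\dim_{\F_\ell} \A(\B)/\ell\A(\B)$. Starting from the short exact sequence of \'etale sheaves $0 \to \A^0 \to \A \to \Phi \to 0$, I apply the snake lemma to multiplication by $\ell$; since $\ell$ is invertible in $k$ and $\A^0$ is smooth with connected fibers, $[\ell]:\A^0\to\A^0$ is \'etale and fiberwise surjective, hence surjective as an \'etale sheaf, and the snake lemma produces
\begin{equation*}
0 \to \A[\ell] \to \A \xrightarrow{\ell} \A \to \Phi/\ell\Phi \to 0.
\end{equation*}
I then split this as $0 \to \A[\ell] \to \A \to \mathcal{I} \to 0$ and $0 \to \mathcal{I} \to \A \to \Phi/\ell\Phi \to 0$, where $\mathcal{I}$ is the image sheaf of $[\ell]$. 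Taking long exact sequences in cohomology yields $\mathcal{I}(\B)/\ell\A(\B) \hookrightarrow H^1(\B, \A[\ell])$ and $\A(\B)/\mathcal{I}(\B) \hookrightarrow H^0(\B, \Phi/\ell\Phi)$; since $\ell\A(\B) \subseteq \mathcal{I}(\B) \subseteq \A(\B)$, summing dimensions yields (i).

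For (ii), the Galois equivariance follows from the Hochschild-Serre spectral sequence $H^p(\Gal(\kbar/k), H^q(\Bbar, -)) \Rightarrow H^{p+q}(\B, -)$. The hypothesis that $A$ has no nonzero $\kbar(\B)$-rational $\ell$-torsion forces $H^0(\Bbar, \A[\ell]) = 0$, so the five-term exact sequence collapses to $H^1(\B, \A[\ell]) \cong H^1(\Bbar, \A[\ell])^{\Gal(\kbar/k)}$; the corresponding identity for $H^0$ of the skyscraper-type sheaf $\Phi/\ell\Phi$ is automatic. To match the bound with the geometric one when $k = \kbar$, I combine the Weil pairing $\A[\ell] \times \A^t[\ell] \to \mu_\ell$ with \'etale Poincar\'e duality on $\Bbar$ to identify $H^2(\Bbar, \A[\ell])$ with the dual of $H^0(\Bbar, \A^t[\ell])$, which vanishes by the hypothesis on $A^t$. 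With $H^0 = H^2 = 0$, Grothendieck-Ogg-Shafarevich gives
\begin{equation*}
\dim_{\F_\ell} H^1(\Bbar, \A[\ell]) = -\chi(\Bbar, \A[\ell]) = 2d_A(2g_\B - 2) + \sum_{x \in \Bbar} \bigl(2d_A - \dim \A[\ell]_{\bar{\eta}}^{I_x} + \mathrm{Sw}_x(\A[\ell])\bigr).
\end{equation*}

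The main obstacle will be the bad-fiber identity: at each closed point $x$ where $A$ has bad reduction, the local Grothendieck-Ogg-Shafarevich contribution together with $\dim_{\F_\ell}(\Phi/\ell\Phi)_x$ must sum to the local conductor exponent $f_x(A)$, so the total agrees with $\deg(\mathfrak{f}_A)$. This reduces to a local analysis relating the $\ell$-torsion of the N\'eron model, the inertia invariants of $T_\ell A$, and the component group, via the connected-\'etale sequence and the classical description of the tame and wild parts of the conductor. Once this local identity is verified the two sides of (ii) match, and the descent in (i) and the spectral-sequence argument in (ii) are formally quite direct given the sheaf-theoretic setup.
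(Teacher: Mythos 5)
Your part (i) is essentially the paper's own argument (Lemma~\ref{lem:l-descent}): your image sheaf $\mathcal{I}$ is exactly the paper's $\A^{\ell\Phi}$, and your two short exact sequences and the resulting injections into $H^1(\B,\A[\ell])$ and $H^0(\B,\Phi/\ell\Phi)$ are the same; the one silent step is that $\rk_\Z A(k(\B))\leq \dim_{\F_\ell}\A(\B)/\ell\A(\B)$ requires $A(k(\B))$ to be finitely generated, which is where the trace hypothesis enters via the Lang--N\'eron theorem. Your five-term Hochschild--Serre argument and the duality identifying $H^2(\Bbar,\A[\ell])$ with $H^0(\Bbar,\A^t[\ell])^\vee$ likewise match Lemmas~\ref{lem:H1GalInv} and~\ref{lem:CohCond} (your observation that $H^0(\Bbar,\A[\ell])=0$ alone kills both edge terms $E_2^{1,0}$ and $E_2^{2,0}$ is correct, and slightly sharper than stated in the paper). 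One caution: ``\'etale Poincar\'e duality'' cannot be invoked naively, since $\A[\ell]$ is locally constant only on the good-reduction locus $j:U\hookrightarrow \B$; what is needed is the duality for sheaves of the form $j_*$ of a local system (here $\A[\ell]\simeq j_*(A[\ell])$ with $\A^t[\ell]$ as its dual), which is the statement the paper cites from Milne.

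The genuine gap is the step you yourself flag as ``the main obstacle'': after applying Grothendieck--Ogg--Shafarevich to $\A[\ell]$, the coincidence with $2d_A(2g_\B-2)+\deg(\mathfrak{f}_A)$ rests on the pointwise identity $\bigl(2d_A-\dim_{\F_\ell} A[\ell]^{I_x}+\mathrm{Sw}_x(A[\ell])\bigr)+\dim_{\F_\ell}\Phi_x/\ell\Phi_x = f_x(A)$, which you assert ``must'' hold but do not prove. It is true, but verifying it is the crux and needs several ingredients: (a) $A[\ell]^{I_x}=\A_x[\ell]$ by the N\'eron property; (b) $\dim \A_x[\ell]=\dim\A^0_x[\ell]+\dim\Phi_x[\ell]$, where surjectivity of $\A_x[\ell]\to\Phi_x[\ell]$ uses divisibility of the smooth connected group $\A^0_x$ over $\kbar$; (c) $\dim\A^0_x[\ell]=2a_x+t_x$ and $f_x=2d_A-(2a_x+t_x)+\mathrm{Sw}_x(V_\ell A)$ in terms of the abelian/toric dimensions of $\A^0_x$; (d) $\mathrm{Sw}_x(A[\ell])=\mathrm{Sw}_x(V_\ell A)$, using that wild inertia is pro-$p$ with $p\neq\ell$; (e) $\dim\Phi_x/\ell\Phi_x=\dim\Phi_x[\ell]$ for finite abelian groups. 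The paper circumvents this entire local analysis: it applies Grothendieck--Ogg--Shafarevich to $\A^0[\ell]$ in the form of Raynaud's Th\'eor\`eme~3~(ii), where $-\chi(\Bbar,\A^0[\ell])$ \emph{is} the geometric bound \eqref{eq:geometricbound} (indeed $\rk_\Z A(\kbar(\B))+r_0=-\chi(\Bbar,\A^0[\ell])$), and then transfers to $\A[\ell]$ globally via the exact sequence $0\to\A^0[\ell]\to\A[\ell]\to\Phi[\ell]\to 0$, additivity of $\chi$, $\chi(\Bbar,\Phi[\ell])=\dim H^0(\Bbar,\Phi[\ell])$ for the skyscraper sheaf, and $\dim H^0(\Bbar,\Phi/\ell\Phi)=\dim H^0(\Bbar,\Phi[\ell])$ (Lemma~\ref{lem:boundcomparison}); the only local fact consumed is (e). Until you carry out (a)--(e), or substitute the paper's sheaf-theoretic comparison, the claim that your bound coincides with the geometric one over $\kbar$ remains unproved.
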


To summarize, \eqref{eq:arithmeticbound} is an arithmetic rank bound which,
under the assumptions of Theorem~\ref{thm:main}~(ii), refines the geometric one \eqref{eq:geometricbound}. On the other hand, when $A$ has nonzero rational $\ell$-torsion points then the bound \eqref{eq:arithmeticbound} may be infinite, see Remark~\ref{remark:H0and1}.

We note that our arithmetic rank bound depends on $\ell$, while the geometric rank bound does not. It would therefore be more precise to say that \eqref{eq:arithmeticbound} is a \emph{family} of arithmetic rank bounds indexed by the primes $\ell$ satisfying the assumptions of Theorem~\ref{thm:main}~(ii).

It is an immediate consequence of the Lang-Néron theorem that $A$ and $A^t$ have finitely many rational torsion points over $\kbar(\B)$. Therefore, the assumptions of Theorem~\ref{thm:main}~(ii) are satisfied for all but finitely many primes $\ell$.

We note that, already in the case of elliptic curves, Theorem~\ref{thm:main} constitutes an improvement to the main result of \cite{gl22}:
\begin{itemize}
	\item the assumptions on $A[\ell]$ are less restrictive: in \emph{loc. cit.} we assume that the absolute Galois group of $\kbar(\B)$ acts transitively on $A[\ell]\setminus\{0\}$;
	\item the bound \eqref{eq:arithmeticbound} is a refinement of the geometric bound for any value of $\ell$ satisfying the assumptions, whereas in \cite{gl22} this is proved only for $\ell=2$, and for $\ell = 3$ when $k$ does not contain third roots of unity.
\end{itemize}

Since the geometric rank bound \eqref{eq:geometricbound} is given by an explicit formula, one may ask how explicit the arithmetic rank bound \eqref{eq:arithmeticbound} can be made, say, when $A$ is the Jacobian of some curve $C$. Given an equation of $C$, one first computes a regular model of $C$ over $\B$ by performing blow-ups. Then, from the data of the intersection graph of the components of the special fiber of this regular model at a bad place $v$, one computes the local groups of components $\Phi_v$ (see \cite[Section 9.6]{NeronModels}). With this procedure, $H^0(\B,\Phi/\ell\Phi)$ can be made explicit. Things are less clear for the group $H^1(\B,\A[\ell])$. In Theorem~\ref{thm:big_monodromy}, we prove, under ``big monodromy'' assumptions, the existence of an injective map
$$
H^1(\B,\A[\ell]) \hookrightarrow \ker(N_{\X/\X^+}:\Pic(\X)[\ell] \to \Pic(\X^+)[\ell]),
$$
where $\X\to \B$ is the (geometrically  irreducible) cover of smooth projective $k$-curves with generic fiber $A[\ell]\setminus\{0\}$, and $\X\to \X^+$ is the quotient of $\X$ by the involution $P\mapsto -P$. In the case of elliptic curves, we stated in \cite[Theorem~1.1]{gl22} the resulting upper bound on the dimension of $H^1(\B,\A[\ell])$. The proof we give here is along the same lines as in the elliptic case. When computed over the algebraic closure of $k$, this bound is usually not as good as the geometric bound, since the genus of $\X$ increases with $\ell$, hence the interest of this statement is arithmetic. Nevertheless, in the particular case when $A$ is the Jacobian of a hyperelliptic curve and $\ell=2$, we obtain the following statement.

\begin{theorem}
	\label{thm:jacobian}
	Assume $\car(k)\neq 2$, and
	let $J$ be the Jacobian of a hyperelliptic curve defined over $k(\B)$ by an equation of the form
	$$
	y^2=f(x),
	$$
	where $f\in k(\B)[x]$ is irreducible over $\overline{k}(\B)$, of odd degree. Let $\X$ be the smooth projective $k$-curve defined by the equation $f(x)=0$. Then we have
	$$
	\rk_\Z J(k(\B)) \leq \dim_{\F_2} \Pic(\X)[2] - \dim_{\F_2} \Pic(\B)[2] + \dim_{\F_2} H^0(\B,\Phi/2\Phi),
	$$
	and this bound agrees with the geometric rank bound \eqref{eq:geometricbound} when $k$ is algebraically closed.
\end{theorem}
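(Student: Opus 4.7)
The overall approach is to combine Theorem~\ref{thm:main}~(i) with an explicit cohomological computation tailored to the hyperelliptic setting. Apply Theorem~\ref{thm:main}~(i) with $\ell=2$: this is valid since $2$ is invertible in $k$, and one checks that the irreducibility of $f$ over $\overline{k}(\B)$ forces the $k(\B)/k$-trace of $J$ to vanish (the geometric monodromy permutes the $n=2g+1$ roots of $f$ transitively, which rules out any constant abelian subvariety of $J$). It then suffices to prove the purely cohomological inequality
\begin{equation*}
\dim_{\F_2} H^1(\B,\J[2]) \;\leq\; \dim_{\F_2}\Pic(\X)[2]-\dim_{\F_2}\Pic(\B)[2],
\end{equation*}
after which the full bound follows by adding $\dim_{\F_2} H^0(\B,\Phi/2\Phi)$.

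The main tool is an exact sequence of \'etale sheaves on $\B$ reflecting the classical description of $J[2]$ via differences of Weierstrass points. Writing $\pi\colon\X\to\B$ for the finite cover of degree $n$, the generic description $J[2]\cong\F_2^n/\F_2\cdot\mathbf{1}$---one copy of $\F_2$ per root of $f$, with the unique relation $\sum_i[W_{\alpha_i}-W_\infty]=0$---upgrades to a short exact sequence
\begin{equation*}
0\longrightarrow \mu_2 \longrightarrow \pi_*\mu_2 \longrightarrow \mathcal{F} \longrightarrow 0
\end{equation*}
on $\B$, in which $\mathcal{F}$ coincides with $\J[2]$ over the open subset $U\subseteq\B$ where $f$ has simple roots. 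Extending this identification, or at least a workable cohomological comparison, between $\mathcal{F}$ and $\J[2]$ across the bad fibers of $\J$ will be the principal technical obstacle: at a place of bad reduction the N\'eron $2$-torsion differs from the na\"ive quotient $\mathcal{F}$ by an adjustment controlled by the component group $\Phi$, and one must ensure this adjustment does not degrade the $H^1$ bound.

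The cohomological computation itself is then quick. Finiteness of $\pi$ gives $H^i(\B,\pi_*\mu_2)=H^i(\X,\mu_2)$, and Kummer theory (valid since $\car(k)\neq 2$) produces short exact sequences
\begin{equation*}
0\to k^*/(k^*)^2\to H^1(\B,\mu_2)\to \Pic(\B)[2]\to 0
\end{equation*}
and analogously over $\X$, using that both curves are geometrically integral so that $H^0(-,\Gm)=k^*$. Since $\deg\pi=n$ is \emph{odd} and every $H^i(\B,\mu_2)$ is $2$-torsion, the norm-pullback identity $N_{\X/\B}\circ\pi^* = \deg\pi$ forces $\pi^*$ to be injective on each $H^i(\B,\mu_2)$ as well as on $\Pic(\B)[2]\hookrightarrow\Pic(\X)[2]$. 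Chasing the long exact sequence attached to the sheaf sequence above then yields
\begin{equation*}
\dim_{\F_2} H^1(\B,\mathcal{F}) = \dim_{\F_2}\Pic(\X)[2]-\dim_{\F_2}\Pic(\B)[2],
\end{equation*}
whence the required bound on $H^1(\B,\J[2])$ follows via the comparison described in the previous paragraph.

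Finally, to verify agreement with the geometric rank bound \eqref{eq:geometricbound} when $k=\overline{k}$, substitute $\dim_{\F_2}\Pic(\X)[2]=2g_\X$ and $\dim_{\F_2}\Pic(\B)[2]=2g_\B$, and apply Riemann-Hurwitz to the separable cover $\pi$ to obtain $2g_\X-2g_\B = 2g(2g_\B-2)+\deg R_\pi$, where $R_\pi$ is the ramification divisor. Matching this against $2g(2g_\B-2)+\deg(\mathfrak{f}_J)$ reduces the agreement claim to the local identity
\begin{equation*}
\deg R_\pi + \dim_{\F_2} H^0(\B,\Phi/2\Phi) = \deg(\mathfrak{f}_J),
\end{equation*}
which is a point-by-point statement at bad fibers, verified using the standard conductor-discriminant analysis for hyperelliptic Jacobians combined with the explicit determination of each local component group $\Phi_v$.
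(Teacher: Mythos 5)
Your reduction to bounding $\dim_{\F_2}H^1(\B,\J[2])$, the trace-vanishing argument, and the downstream cohomological chase (injectivity of $\pi^*$ on $2$-torsion groups via $N_{\X/\B}\circ\pi^*=\deg\pi$ odd, cancellation of the $k^\times/(k^\times)^2$ ambiguity in Kummer theory) are all sound and match the paper. But there is a genuine gap at the central step, and you flag it yourself without resolving it: you never establish any comparison between the sheaf $\mathcal{F}=\pi_*\mu_2/\mu_2$ and $\J[2]$ beyond the good locus, and without such a comparison the computed value of $\dim_{\F_2}H^1(\B,\mathcal{F})$ says nothing about $H^1(\B,\J[2])$. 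Moreover, your guess about what happens at bad fibers is off the mark: there is no ``adjustment controlled by $\Phi$''; the identification holds on the nose over all of $\B$. The paper closes exactly this step as follows. Since $2$ is invertible in $k$, the group schemes $\J[2]$, $\mu_2$ and $\Res_{\X/\B}\mu_2=\pi_*\mu_2$ are N\'eron models of their generic fibers, i.e.\ direct images of their generic fibers on the smooth site. Generically, a rank count (both sides have order $2^{\deg f-1}$, using that $\deg f$ is odd so the norm is surjective) shows the Weil-pairing map $w$ is an isomorphism from $J[2]$ onto $\ker\bigl(N_{k(X)/k(\B)}:\Res_{k(X)/k(\B)}\mu_2\to\mu_2\bigr)$; left-exactness of the direct image functor then propagates this to an exact sequence of \'etale sheaves
$$
\begin{CD}
0 @>>> \J[2] @>>> \Res_{\X/\B}\mu_2 @>N_{\X/\B}>> \mu_2 @>>> 0
\end{CD}
$$
on all of $\B$, with right-exactness again coming from oddness of the degree (the composite of the unit $\mu_2\to\pi_*\mu_2$ with the norm is multiplication by $\deg\pi$, hence the identity on $\mu_2$). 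Because this sequence is split by the same oddness, your $\mathcal{F}$ coincides globally with $\ker(N_{\X/\B})=\J[2]$, with no $\Phi$-correction; this N\'eron-model argument is precisely the missing lemma in your write-up.

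The endgame also contains an unproved assertion. Your Riemann--Hurwitz reduction of the agreement claim to the local identity $\deg R_\pi+\dim_{\F_2}H^0(\B,\Phi/2\Phi)=\deg(\mathfrak{f}_J)$ is arithmetically correct, but that identity is not an off-the-shelf ``standard conductor-discriminant'' fact in the stated generality (arbitrary perfect $k$ with $\car(k)\neq 2$, where Swan contributions to $\mathfrak{f}_J$ and wild ramification of $\pi$ can both occur), and you do not prove it. The paper avoids all local analysis: once $H^0(\overline{\B},\J[2])=0$ (from irreducibility of $f$ over $\kbar(\B)$) and $J\simeq J^t$ (principal polarization, a point your proposal should also note, since Theorem~\ref{thm:main}~(ii) requires the hypothesis for the dual as well), the hypotheses of Theorem~\ref{thm:main}~(ii) hold, and Lemma~\ref{lem:boundcomparison}---a global Euler-characteristic computation feeding into the Grothendieck-Ogg-Shafarevich formula, combined with $\dim_{\F_2}H^1(\overline{\B},\J[2])=\dim_{\F_2}\Pic(\overline{\X})[2]-\dim_{\F_2}\Pic(\overline{\B})[2]=2g_\X-2g_\B$---yields the agreement with \eqref{eq:geometricbound} directly. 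Your local identity is then a corollary of this comparison rather than an available input to it.
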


In the setting of Theorem~\ref{theoremd5} below, the above bound is sharp, as one can see from the proof of Theorem~\ref{theoremd5}~(i).

The statement of Theorem~\ref{thm:jacobian} can be made more explicit. Firstly, we introduce some notation. If $Y$ is a curve and $\Sigma$ is a reduced divisor on $Y$, the group of divisors with rational coefficients above $\Sigma$ is defined to be
$$
\Div(Y\setminus \Sigma) \oplus \bigoplus_{P\in\Sigma} \Q\cdot P.
$$
We denote by $\Pic(Y,\Q.\Sigma)$ the quotient of this group by the usual group of principal divisors on $Y$; we refer the reader to \cite[\S{}2]{GHL23} for basic properties of this construction.

Let $C$ be a hyperelliptic curve over $k(\B)$ defined by an equation of the form $y^2=f(x)$, with the same assumptions as in Theorem~\ref{thm:jacobian}. Since $f$ has odd degree, $C$ has a unique point at infinity. A semi-reduced divisor (of degree zero) on $C$ is by definition of the form $\sum n_iP_i -(\sum n_i)\infty$ where the $P_i$ are points on the affine model, whose $x$-coordinates are all distinct, and the $n_i$ are positive integers; one also requires ramification points of $x:C\to \mathbb{P}^1$ to appear with multiplicity at most one. The Mumford representation of a semi-reduced divisor $D$ of degree zero on $C$ is a pair $(a_D,b_D)$, where $a_D\in k(B)[x]$ denotes the monic polynomial whose zeroes are the $x$-coordinates of affine points of the divisor $D$, taking into account multiplicities, and $b_D\in k(B)[x]$ is the polynomial whose value at the $x$-coordinate of a point of $D$ is the corresponding $y$-coordinate, and such that $\deg b_D < \deg a_D$ (one can construct it by Lagrange interpolation). We refer to the original work of Mumford \cite[Chapter~IIIa]{tata2} for further details.

\begin{proposition}
	\label{explicitdescent}
	Under the assumptions of Theorem~\ref{thm:jacobian}, we have an injective morphism
	\begin{align*}
		\phi_2:J(k(\B))/2J(k(\B)) &\hookrightarrow \Pic(\X,\Q.\Sigma_2^\X)[2] \\
		[D] &\mapsto \frac{1}{2}\divisor(a_D(x)).
	\end{align*}
	In this formula, $D$ is a semi-reduced divisor, $a_D$ is the $x$-coordinate polynomial in the Mumford representation of $D$, and $\Sigma_2^\X$ is the inverse image in $\X$ of the set of places $v\in \B$ of bad reduction of $J$ for which $(\Phi_v/2\Phi_v)(k_v)\neq 0$, where $k_v$ denotes the residue field of $v$.
\end{proposition}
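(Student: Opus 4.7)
The plan is to realize $\phi_2$ as the composition of the classical Cassels $2$-descent map for hyperelliptic Jacobians with a rational Weierstrass point, with a quotient map landing in $\Pic(\X, \Q.\Sigma_2^\X)[2]$, and to show that the composition remains injective.

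Since $f \in k(\B)[x]$ has odd degree and is irreducible over $\overline{k}(\B)$, the hyperelliptic curve $C : y^2 = f(x)$ has a unique rational Weierstrass point at infinity and $L := k(\B)[x]/(f(x))$ is a field isomorphic to $k(\X)$. The Cassels--Schaefer $2$-descent provides an injective homomorphism
\[
\mu : J(k(\B))/2J(k(\B)) \hookrightarrow L^*/L^{*2}, \qquad [D] \mapsto a_D(\alpha) \bmod L^{*2},
\]
where $\alpha \in L$ is the image of $x$; the element $a_D(\alpha)$ can be viewed as a nonzero rational function on $\X$.

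The technical heart of the proof is a local parity statement: for every place $w$ of $\X$ lying above a place $v$ of $\B$ with $(\Phi_v/2\Phi_v)(k_v) = 0$, the valuation $v_w(a_D(\alpha))$ is even. I would prove this place by place. At a good place of $J$, a direct valuative analysis of the reduced Mumford data $(a_D, b_D)$ and of $\bar f$ gives the result. At a bad place, I would combine the filtration $0 \to \J^0 \to \J \to \Phi \to 0$ on the N\'eron model of $J$ with the compatibility of $\mu$ with local reduction to identify the parity of $v_w(a_D(\alpha))$ with the image of $[D]$ in $\Phi_v(k_v)/2\Phi_v(k_v)$. Granting this, $\frac{1}{2}\divisor(a_D(\alpha))$ is a well-defined element of $\Div(\X\setminus\Sigma_2^\X) \oplus \bigoplus_{P \in \Sigma_2^\X} \Q\cdot P$, and its class in $\Pic(\X, \Q.\Sigma_2^\X)$ is $2$-torsion because its double is the principal divisor $\divisor(a_D(\alpha))$. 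This defines $\phi_2$.

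For injectivity, if $\phi_2([D]) = 0$ then $a_D(\alpha) = g^2 u$ in $L^*$ with $\divisor(u)$ supported on $\Sigma_2^\X$. Running the local analysis in reverse --- the contribution of $u$ to $\Phi_v(k_v)/2\Phi_v(k_v)$ vanishes at each $v$ below $\Sigma_2^\X$, while outside $\Sigma_2^\X$ the class of $u$ is already a square --- forces $\mu([D]) = 0$, hence $[D] = 0$ by injectivity of $\mu$. The main obstacle is the local parity computation at bad places: this is the hyperelliptic analogue of the explicit local analysis carried out for elliptic curves in \cite{gl22}, and requires understanding how the Mumford representation interacts with bad reduction and how the resulting valuations on $\X$ factor through the component group of the N\'eron model.
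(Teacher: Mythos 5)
Your skeleton is the same as the paper's --- Schaefer's $(x-T)$ description of the $2$-descent map \cite{Schaefer1995}, followed by $h\mapsto \frac{1}{2}\divisor(h)$ into $\Pic(\X,\Q.\Sigma_2^\X)[2]$ --- but two steps have genuine gaps. The first is the step you yourself flag as the ``technical heart'': the parity of $v_w(a_D(\alpha))$ at places outside $\Sigma_2^\X$. You propose a place-by-place valuative analysis of the Mumford data against the component-group filtration at bad places, but this is only announced, not carried out, and it is precisely the computation the paper avoids entirely. The paper gets the integrality cohomologically, in one stroke: by definition of $\Sigma_2$ one has $H^0(\B\setminus\Sigma_2,\Phi/2\Phi)=0$, so over $\B':=\B\setminus\Sigma_2$ the exact sequences in the proof of Lemma~\ref{lem:l-descent} show that the Kummer image of $J(k(\B))=\J(\B')$ lands in $H^1(\B',\J[2])$; composing with the embedding $w:\J[2]\hookrightarrow \Res_{\X/\B}\mu_2$ established in the proof of Theorem~\ref{thm:jacobian} places the class in $H^1(\X\setminus\Sigma_2^\X,\mu_2)$, which is exactly the even-valuation statement (good and bad places treated uniformly). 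If you insist on the local route you must actually prove the bad-place parity claim; the cohomological route makes it unnecessary.

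The second gap is more serious: your injectivity argument mischaracterizes the kernel of the final map. In $\Pic(\X,\Q.\Sigma_2^\X)$ the rational coefficients at $\Sigma_2^\X$ are \emph{adjoined} to the divisor group, not killed, so $\phi_2([D])=0$ means $\frac{1}{2}\divisor(a_D(\alpha))=\divisor(g)$ exactly, whence $a_D(\alpha)=c\,g^2$ with $c\in k^\times$ a constant; that is, the kernel of $\kappa_2:h\mapsto\frac{1}{2}\divisor(h)$ is $k^\times/(k^\times)^2$ (see \cite[\S{}2]{GHL23}), not classes $g^2u$ with $\divisor(u)$ supported on $\Sigma_2^\X$. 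A nonsquare constant $c$ is a unit of even valuation at \emph{every} place of $\X$ and contributes nothing to any component group, so ``running the local analysis in reverse'' can never rule it out: constants are invisible to all of your local parity data, and your argument cannot establish injectivity whenever $k^\times\neq(k^\times)^2$. The paper disposes of the constants globally: the image of $h^1w$ lies in the kernel of the norm $N_{k(\X)/k(\B)}$ on the $H^1$'s (because $w$ factors through the kernel of the norm on $\Res_{\X/\B}\mu_2$), while a constant $c$ has norm $c^{\deg f}\equiv c \bmod{}$ squares since $\deg f$ is odd, and $c\in (k(\B)^\times)^2$ forces $c\in(k^\times)^2$; hence nontrivial constant classes never meet the image, and $\mu([D])=0$ follows from injectivity of $\mu$ (which, note, is itself not free --- in the paper it rests on Lemma~\ref{lem:h1w-inj} together with the oddness of $[k(\X):k(\B)]$, though your appeal to the standard odd-degree case is acceptable). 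Some such global norm argument must be added for your proof to close.
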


When $f$ has degree $3$, the curve $y^2=f(x)$ is elliptic, and the properties of the map $\phi_2$ above were proved in \cite[\S{}4]{GHL23}. The general case can be proved along the same lines, thanks to the work of Schaefer \cite[Theorem~1.2]{Schaefer1995}, who gives an explicit description of the 2-descent map for Jacobians of hyperelliptic curves. The details are given in \S{}\ref{subsec:hyperelliptic}.

The main interest of introducing the 2-descent map $\phi_2$ is that one can compute its values explicitly.
The second and third authors with Hallouin \cite{GHL23} used this tool in order to study integral points on elliptic curves over $k(t)$. Continuing this work, we now study in detail the $k(t)$-points of the Jacobians of genus $2$ curves $C$ over $k(t)=k(\mathbb{P}^1)$ of the form: 
\begin{align*}
	C:y^2=x^d+g(t),
\end{align*}
where $d=5,6$, $\car(k)\nmid 6d$, and $g(t)$ is squarefree with $\deg g(t)=3$ (the cases $\deg g(t)<3$ could be handled similarly).  In particular, we study certain subsets of $k(t)$-points with small height and satisfying certain integrality properties. The assumptions on $d$ and $g$ imply that the underlying surface is rational, which is a quite advantageous situation.

Suppose first that $d=5$.   Let $J$ be the Jacobian of $C$ over $k(t)$. For a divisor class in $J(k(t))$ represented by a (semi-reduced) divisor $D$, we let $(a_D,b_D)\in k(t)[x]^2$ be the pair of polynomials from the Mumford representation of $D$ as recalled before Proposition~\ref{explicitdescent}, and we let $c_D\in k(t)[y]$ be the polynomial whose roots are the $y$-coordinates of the (affine) points of $D$, counted with multiplicity.

As before, let $\mathcal{X}$ be the smooth projective curve over $k$ defined by $x^5+g(t)=0$, and let $\divisor(x)=P_1+P_2+P_3-3\infty$, where $P_1,P_2,P_3$ correspond to the $3$ roots of $g$ and $\infty$ denotes the unique point on $\mathcal{X}$ at infinity. Applying the Riemann-Hurwitz formula to the projection $\mathcal{X}\to\mathbb{P}^1$, $(t,x)\mapsto t$, and using that $P_1+P_2+P_3\sim 3\infty$, we find that
\begin{align*}
K_{\mathcal{X}}\sim 5(-2\infty)+4(\infty+P_1+P_2+P_3)\sim 6\infty.
\end{align*}
In particular, the genus of $\mathcal{X}$ is $4$. Recall that a theta characteristic on $\mathcal{X}$ is a divisor class $[D]$ such that $2D\sim K_{\mathcal{X}}$, and it is called odd (resp.\ even) if $\dim H^0(\mathcal{X},\mathcal{O}(D))$ is odd (resp.\ even). It is well known \cite{Mumford71} that the number of odd theta characteristics on a curve of genus $g$ over $\kbar$ is given by $2^{g-1}(2^{g}-1)$ (assuming as always that $\car(k)\neq 2$).

\begin{theorem}
	\label{theoremd5}
	Assume that $\car(k)\nmid 30$.
	Let $J$ be the Jacobian of a genus $2$ curve $C$ defined over $k(t)$  by an equation of the form
	\begin{align*}
		C:y^2=x^5+g(t),
	\end{align*}
	where $g(t)$ is a squarefree cubic polynomial over $k$.  Let
	\begin{align*}
		R:=\{[D]\in J(\kbar(t))\mid a_D=x^2+bx+c(t), b\in \kbar, c(t)\in \kbar[t], \deg c(t)=1\}.
	\end{align*}
	\begin{enumerate}
	\item[(i)] The group $J(\kbar(t))$ is torsion-free, the points of $R$ generate $J(\kbar(t))$, and the Mordell-Weil lattice of $J(\kbar(t))$ is the $E_8$ lattice, with the elements of $R$ corresponding to the $240$ root vectors of $E_8$.\label{theoremd5i}
	\item[(ii)] There is a $2$-to-$1$ map (induced by $2$-descent)
	\begin{align*}
		\{[D]\in R\mid a_D\in k(t)[x]\}&\to \{\text{odd $k$-rational theta characteristics on $\mathcal{X}$} \},\\
		[D]&\mapsto \frac{1}{2}\divisor(a_D)+3\infty.
	\end{align*}
	In particular, when $k=\kbar$ there is a $2$-to-$1$ map, $R\to \{\text{odd theta characteristics on $\mathcal{X}$} \}$.
	
	\item[(iii)]
	Let $E$ be the elliptic curve over $k$ given by $y^2=g(t)$.  There is a $10$-to-$1$ map (induced by $5$-isogeny descent)
	\begin{align*}
		\{[D]\in R\mid c_D\in k(t)[y]\}&\to E[5](k)\setminus\{0\},\\
		[D]&\mapsto \frac{1}{5}\divisor(c_D).
	\end{align*}
		In particular, when $k=\kbar$ there is a $10$-to-$1$ map, $R\to  E[5](\kbar)\setminus\{0\}$.

	\end{enumerate}
	From (ii) and (iii) with $k=\kbar$, it follows that the cardinality of $R$ is given by
	$$
		|R|=240 = 2^8-2^4 =2(5^3-5).
	$$

\end{theorem}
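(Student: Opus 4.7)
The plan is a sandwich argument for (i)---a rank upper bound from Theorem~\ref{thm:jacobian} matched by an explicit lower bound from visible elements of $R$---followed by explicit descent computations for (ii) and (iii) that also deliver the count $|R|=240$.

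For (i), I would work first over $\kbar$. The curve $\mathcal{X}$ has genus $g_{\mathcal{X}}=4$, so $\dim_{\F_2}\Pic(\mathcal{X})[2]=8$, while $\Pic(\mathbb{P}^1)[2]=0$. Analysis of the Weierstrass equation $y^2=x^5+g(t)$ at the zeros of $g$ and at $\infty$ via the Namikawa-Ueno classification of genus $2$ degenerations gives the component groups $\Phi_v$, and one verifies $H^0(\mathbb{P}^1,\Phi/2\Phi)=0$, so Theorem~\ref{thm:jacobian} yields $\rk_\Z J(\kbar(t))\le 8$. For the matching lower bound, I would exhibit explicit elements of $R$ using the automorphism $x\mapsto\zeta_5 x$ (defined over $\kbar(\zeta_5,t)$) together with $y\mapsto -y$, and compute N\'eron-Tate heights via intersection theory on the relatively minimal regular model of $C$ (a genus $2$ fibration over $\mathbb{P}^1$, which is a rational surface by hypothesis on $d$ and $g$). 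Eight linearly independent points then saturate the rank bound. To pin down the lattice as $E_8$ rather than another even positive-definite rank-$8$ lattice, I would compute the discriminant of the Mordell-Weil lattice via a Shioda-Tate-style formula, showing it equals $1$ (using the triviality of the component contributions), so the lattice is unimodular and hence $E_8$. The elements of $R$, being the $240$ vectors of norm $2$, are then precisely the root vectors. Torsion-freeness is automatic for $E_8$.

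For (ii), I would apply Proposition~\ref{explicitdescent} to $[D]\in R$ with $a_D\in k(t)[x]$. Since $\gcd(3,5)=1$ at the unique point at infinity of $\mathcal{X}$, one has $\text{ord}_\infty x=-3$ and $\text{ord}_\infty t=-5$, hence $\text{ord}_\infty a_D=-6$. Doubling the image yields $\divisor(a_D)+6\infty\sim K_{\mathcal{X}}\sim 6\infty$, certifying the theta characteristic property of $\frac{1}{2}\divisor(a_D)+3\infty$. Since $[D]$ and $[-D]$ share the same $a_D$, the map factors through $J(k(t))/2J(k(t))$ and is at most $2$-to-$1$ on $R$. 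Over $\kbar$, the count $|R|/2=120=2^{g_{\mathcal{X}}-1}(2^{g_{\mathcal{X}}}-1)$ equals exactly the number of odd theta characteristics on $\mathcal{X}$; to place the image inside the odd locus I would verify parity directly by computing $h^0(\mathcal{X},\mathcal{O}(D'))$ on a representative image divisor, after which cardinality forces surjectivity onto the odd theta characteristics.

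For (iii), I would construct a $5$-isogeny using the order-$5$ automorphism $\sigma:(x,y)\mapsto(\zeta_5 x,y)$ on $C$ over $\kbar(\zeta_5,t)$. The induced endomorphism $\sigma^\ast$ equips $J_{\kbar(\zeta_5)(t)}$ with a $\mathbb{Z}[\zeta_5]$-module structure; combined with Galois descent to $k(t)$, this produces an isogeny relating $J$ to a product involving the elliptic curve $E$, whose equation $y^2=g(t)$ arises from the $x=0$ slice of $C$. The resulting $5$-isogeny descent map takes the form $[D]\mapsto \frac{1}{5}\divisor(c_D)\in E[5](k)$, and for $[D]\in R$ with $c_D\in k(t)[y]$ a pole-order count of $c_D$ on $E$ shows the image is nonzero. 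The $10$-to-$1$ fibers reflect orbits of the $\{\pm 1\}\times\mu_5$ action on $R$, consistent with $|R|=10\cdot 24=240$. The main obstacles I anticipate are: the precise identification of the Mordell-Weil lattice as $E_8$ in (i) (requiring a discriminant/unimodularity argument beyond the rank count), the parity verification in (ii), and the setup of the $5$-isogeny/$\mu_5$-Galois descent in (iii) over a base field $k$ not containing $\zeta_5$.
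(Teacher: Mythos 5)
Your upper bound $\rk_\Z J(\kbar(t))\le 8$ via Theorem~\ref{thm:jacobian} matches a remark in the paper, but beyond that the proposal has genuine gaps in all three parts, and it misses the paper's central device: a bijection $R\leftrightarrow R'$ with minimal sections of the \emph{second} fibration of the same rational surface, the elliptic curve $E':y^2=x^5+g(t)$ over $k(x)$, which is a rational elliptic surface with all fibers irreducible, hence with Mordell--Weil lattice $E_8$ and $240$ known minimal sections $(t(x),y(x))$, $\deg t(x)\le 2$, $\deg y(x)\le 3$. In (i): torsion-freeness is \emph{not} ``automatic for $E_8$'' --- the Mordell--Weil lattice lives on $J(\kbar(t))$ modulo torsion, so naming the lattice says nothing about torsion; the paper gets $J(\kbar(t))\cong \NS(X)/T\cong\NS(X')/T'\cong\Z^8$ from the two-fibration comparison. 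Your unimodularity claim ``using the triviality of the component contributions'' conflates $\Phi_v$ with fiber geometry: the fiber over $\infty$ is reducible of type [VIII-3], $2B+3\Gamma_1+2\Gamma_2+\Gamma_3$, contributing $m_\infty-1=3$ to Shioda--Tate even though $\Phi_\infty=0$, and the discriminant bookkeeping needs this full intersection data plus $\rho(X)=13$, which your sketch gives no way to compute. Your explicit-point plan is also thin: a single $\mu_5$-orbit spans a $\Z[\zeta_5]$-module of $\Z$-rank at most $4$, so you need at least two independent orbits of explicit points (and such points are genuinely complicated --- see the radicals in the paper's proof of (iii)); and you assert, rather than prove, that $R$ is exactly the set of norm-$2$ vectors, which the paper derives from Shioda's description of the minimal sections of $X'$. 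In (ii), computing $h^0$ for \emph{one} representative is a real gap: parity is not constant among theta characteristics on a fixed curve, so one computation says nothing about the other $119$ images. The paper instead shows each image $\frac12\divisor(a_D)+3\infty$ is \emph{effective} (because the relevant zero divisor on $\X$ is cut out by a square), invokes the fact that the effective theta characteristics on this genus-$4$ curve are the $120$ odd ones together with the unique vanishing even theta characteristic $3\infty$, and excludes $3\infty$ by injectivity of the descent map on $J/2J$ combined with the root-vector fact $u-v\in 2E_8\iff u=\pm v$ --- the same fact you also need (but do not invoke) to cap your fibers at $2$.

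Part (iii) is where the proposal most clearly fails. First, an isogeny ``relating $J$ to a product involving the elliptic curve $E$'' cannot exist: $E:y^2=g(t)$ is a \emph{constant} curve over $k$, and the $\kbar(t)/\kbar$-trace of $J$ vanishes (as $x^5+g(t)$ is irreducible over $\kbar(t)$), so no constant abelian variety is an isogeny factor of $J$. In the paper, $E$ enters only as the curve supporting the kernel points $(0,\pm\sqrt{g(t)})$ of a rational $5$-isogeny $J\to J'$ (the cyclic subgroup generated by $[P-\infty]$ with $x(P)=0$), i.e., as the target of the isogeny-descent map. Second, your counting mechanism is incorrect: $\mu_5$ fixes $c_D$ (it rescales only $x$-coordinates) and so preserves fibers, yielding $5$ points per fiber, but $[-1]$ sends $c_D(y)$ to $\pm c_D(-y)$ and therefore maps the fiber over $Q\in E[5]$ to the fiber over $-Q$; thus $\{\pm1\}\times\mu_5$-orbits do not produce $10$-element fibers. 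The paper has no soft argument here: it normalizes $E$ as the universal elliptic curve over $X_1(5)$ in Weierstrass form, exhibits an explicit point of $R'$ involving $w$ with $w^5=1296(5\sqrt{5}+11)/(2u-5\sqrt{5}-11)$, verifies by machine computation that it maps to the given $5$-torsion point, obtains the remaining $5$ points in that fiber from the substitution $\sqrt{5}\mapsto-\sqrt{5}$ (not from $[-1]$), and only then concludes exactness of the $10$-to-$1$ count from $|R|=240=10\cdot 24$. Without an explicit computation of this kind, or a corrected symmetry argument supplying the second half of each fiber, your (iii) --- and hence the identity $240=2(5^3-5)$ --- remains unproved.
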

We note that using two different descent maps to study the set $R$ allows one to give an ``explanation" of a striking exponential-Diophantine identity.  Indeed, dividing the last identities of the theorem by $2$, we find
\begin{align}
	120=2^7-2^3=5^3-5.\label{unit1}
\end{align}

We note the further identity 
\begin{align}
	240=2^8-2^4=3^5-3\label{unit2},
\end{align}
which can be obtained by arguments analogous to those used in the proof of Theorem~\ref{theoremd5} applied to certain elliptic surfaces (see \cite[Table~1]{GHL23}). Similar considerations on elliptic surfaces in \cite{GHL23} produced the identities:
\begin{align}
	6&=2^3-2=3^2-3,\label{unit3}\\
	24&=2^5-2^3=3^3-3\label{unit4}.
\end{align}
Alternatively, each of \eqref{unit1}-\eqref{unit4} produces two solutions to an equation of the form
\begin{align}
	\label{Pillai}
	2^x-p^y=c
\end{align}
in positive integers $x,y$, where $c$ is a fixed nonzero integer and $p$ a fixed (positive) prime. Such equations were systematically studied by Pillai in the 1930's and 1940's.  Remarkably, it is conjectured that \eqref{unit1}-\eqref{unit4} yield all cases where \eqref{Pillai} has two solutions (see \cite{Bennett01} for a more general conjecture). In the case $p=3$, this was conjectured by Pillai \cite{Pillai} and proven by Stroeker and Tijdeman \cite{ST}. Thus, by using two different descent maps to study certain sets of points on Jacobians over function fields, we find ``explanations" of (conjecturally) every instance of two solutions to Pillai's equation \eqref{Pillai}.

We now turn to the case $d=6$. Since $d$ is even, Theorem~\ref{thm:jacobian} does not apply, and indeed we will see that this case exhibits quite different behavior; in particular, the natural analogue of the descent map $\phi_2$ of Proposition~\ref{explicitdescent} is not even well-defined (see Remark \ref{remdescent}).

\begin{theorem}
	\label{theoremd6}
	Assume that $\car(k)\nmid 6$.
	Let $J$ be the Jacobian of a genus $2$ curve $C$ defined over $k(t)$ by an equation of the form
	\begin{align*}
		C:y^2=x^6+g(t),
	\end{align*}
	where $g(t)$ is a squarefree cubic polynomial over $k$. Then
	\begin{align*}
		J(\kbar(t))\cong \mathbb{Z}^6\oplus \mathbb{Z}/3\mathbb{Z},
	\end{align*}
	and the associated Mordell-Weil lattice is $E_6^*$ (the dual of the $E_6$ lattice). The torsion subgroup is cyclic of order three, generated by $[\infty^--\infty^+]$ where $\infty^+,\infty^-$ are the two points at infinity on $C$ (in some order).  Let
	\begin{align*}
		R:=\{[D]\in J(\kbar(t))\mid a_D=x^2+bx+c(t) \text{ or }a_D=x+c(t), b\in \kbar, c(t)\in \kbar[t], \deg c(t) = 1\}.
	\end{align*}
	
	The points of $R$ generate $J(\kbar(t))$ modulo torsion. We may write $R=R_1\cup R_2$, where $R_1$ is mapped $3$-to-$1$ onto the $54$ elements of minimal norm $4/3$ in $E_6^*$ (the $3$-torsion subgroup acts on $R_1$) and $R_2$  is mapped $1$-to-$1$ onto the $72$ elements of norm $2$ in $E_6^*$. We have
	\begin{align*}
		|R|=234=3\cdot 54+72.
	\end{align*}
\end{theorem}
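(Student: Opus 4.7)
The plan is to establish the statement in stages: first determine the rank and torsion of $J(\kbar(t))$, then identify the Mordell--Weil lattice as $E_6^*$, and finally parameterize and enumerate the set $R$.

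For the rank, the key structural input is the existence of two extra involutions on $C$ beyond the hyperelliptic one, namely $\tau\colon(x,y)\mapsto(-x,y)$ and $\tau\iota\colon(x,y)\mapsto(-x,-y)$. Their quotients realize $C$ as a double cover of two elliptic curves over $\kbar(t)$: $E_1\colon Y^2 = X^3 + g(t)$ (quotient by $\tau$, via $X=x^2$) and, after a standard Weierstrass transformation, $E_1'\colon Y^2 = X^3 + g(t)^2$ (quotient by $\tau\iota$, starting from the substitution $X=x^2$, $Y=xy$). These are cubic twists of one another by $g(t)^{1/3}\notin\kbar(t)$, hence non-isomorphic over $\kbar(t)$, so the product map is an isogeny $J\to E_1\times E_1'$. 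Both $E_i$ define rational elliptic surfaces over $\mathbb{P}^1_t$, and a Shioda--Tate computation from their singular-fiber configurations---three fibers of type $\mathrm{II}$ plus a type $I_0^*$ fiber for $E_1$, and three fibers of type $\mathrm{IV}$ for $E_1'$---yields ranks $4$ and $2$, totalling $6$. Alternatively, since the surface $y^2 = x^6 + g(t)$ is rational (as noted in the preceding discussion), one may apply a Shioda--Tate-type formula for the genus-$2$ fibration directly, computing the rank from the Picard number of a smooth model.

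For the torsion, a direct local computation with $u = 1/x$ at the two points at infinity shows that the function $y + x^3 \in \kbar(t)(C)$ has divisor $3(\infty^- - \infty^+)$, so $[\infty^- - \infty^+]$ is a nontrivial $3$-torsion point. To bound the torsion above by $\Z/3\Z$, I would either specialize to a generic smooth fiber and compare torsion groups there, or track the torsion subgroups of each $E_i$ (in particular the $3$-torsion $(0,\pm g)$ on $E_1'$) through the isogeny $J\to E_1\times E_1'$. The Mordell--Weil lattice is then identified as $E_6^*$ by combining rank $6$ with a Shioda--Tate discriminant computation (yielding discriminant $3$) and an explicit height match of small-height points in $R$ with the expected norms $4/3$ and $2$.

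The set $R$ is parameterized via the Mumford representation: the compatibility condition $a_D \mid b_D^2 - (x^6 + g(t))$ with $\deg b_D < \deg a_D$, for each prescribed form of $a_D$, imposes an explicit polynomial system in the coefficients, whose solutions enumerate $R$. The partition $R = R_1\cup R_2$ corresponds to the two shapes of $a_D$ (degree $2$ resp.\ degree $1$). The $3$-to-$1$ map from $R_1$ onto the $54$ elements of minimal norm $4/3$ in $E_6^*$ arises because adding $[\infty^- - \infty^+]$ to $[D]\in R_1$ preserves $a_D$ (the affine support is disjoint from $\infty^\pm$) while permuting three lifts with distinct $b_D$; the $1$-to-$1$ map from $R_2$ onto the $72$ elements of norm $2$ reflects that in the degree-$1$ case the torsion translates of $[D]$ fall outside $R_2$. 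Heights are computed by pulling back through the isogeny $J\sim E_1\times E_1'$ using the Shioda height pairings on the two rational elliptic surfaces. The main obstacle is this last matching: proving the two maps are exactly $3$-to-$1$ and $1$-to-$1$ surjections onto the specified shells of $E_6^*$, and that $R$ generates $J(\kbar(t))$ modulo torsion, requires an accurate count of the Mumford representations satisfying the constraints together with an explicit identification of the Mordell--Weil height pairing on $J(\kbar(t))$ with the $E_6^*$ pairing via the isogeny.
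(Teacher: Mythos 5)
Your first half follows essentially the same route as the paper: the two bielliptic quotients onto $E_1\colon y^2=x^3+g(t)$ and $E_2\colon y^2=x^3+g(t)^2$, the induced $(2,2)$-isogeny $J\sim E_1\times E_2$, the Mordell--Weil groups $\Z^4$ and $\Z^2\oplus\Z/3\Z$ of the two rational elliptic surfaces (your fiber configurations --- three $\mathrm{II}$ plus one $\mathrm{I}_0^*$, resp.\ three $\mathrm{IV}$ --- are correct and match the Oguiso--Shioda types the paper cites), and the divisor of $y\pm x^3$ for the $3$-torsion generator. Two small repairs: ``non-isomorphic, hence the product map is an isogeny'' is not a valid inference --- the $(2,2)$-splitting of the Jacobian of a bielliptic genus-$2$ curve is what one should invoke --- and to pass from the isogeny to the group isomorphism $J(\kbar(t))\cong E_1(\kbar(t))\oplus E_2(\kbar(t))$ the paper uses that $x^6+g(t)$ is irreducible over $\kbar(t)$, so $J(\kbar(t))[2]=0$ and the kernel of the isogeny meets the rational points trivially; your sketch leaves this step to ``tracking through the isogeny'' without naming it.

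The second half has genuine gaps. The paper's key device, absent from your plan, is the \emph{second} fibration: projecting to the $x$-line realizes the same surface as the rational elliptic surface $X'$ attached to $E'\colon y^2=g(t)+x^6$ over $\kbar(x)$, whose Mordell--Weil lattice is $E_8$ with $240$ root-vector sections $R'$; the correspondence from the $d=5$ proof identifies $R$ with $R'$ minus the six points $(\alpha,\pm x^3)$, $g(\alpha)=0$, whose $t$-coordinate is constant, giving $|R|=240-6=234$ at once, and the identification of the lattice as $E_6^*$ together with the shell structure is then verified by an explicit computation of $\NS(X)/T$ with its pairing, using that $X$ is obtained from $X'$ by one blow-down and four blow-ups. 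Your substitutes do not close these steps. First, rank $6$ plus ``discriminant $3$'' does not characterize $E_6^*$ (and in fact $\det E_6^*=1/3$; it is the discriminant \emph{group} that has order $3$); moreover the sublattice pulled back through the isogeny is $D_4^*(2)\perp A_2^*(2)$, of determinant $16/3$ and index $4$ in the Mordell--Weil lattice, so controlling exactly how $J(\kbar(t))$ modulo torsion overlattices this orthogonal sum is the whole difficulty --- your proposal acknowledges but does not resolve it. Second, your mechanism for the $3$-to-$1$ map is false: the three classes $[D]$, $[D]\pm[\infty^--\infty^+]$ cannot share the same $a_D$. Indeed, for a fixed quadratic $a_D$ the congruence $b^2\equiv x^6+g(t)\pmod{a_D}$ has at most four solutions with $\deg b<2$, closed under $b\mapsto -b$, so at most four classes share a given $a_D$ and they come in inverse pairs (note $-[P_1+P_2-\infty^+-\infty^-]=[\iota P_1+\iota P_2-\infty^+-\infty^-]$ since $K_C\sim\infty^++\infty^-$); but for nontorsion $[D]$ the six classes $\pm[D]+j[\infty^--\infty^+]$, $j=0,1,2$, are pairwise distinct. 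The correct statement is only that torsion translation permutes $R_1$, changing $a_D$. Third, the direct enumeration of Mumford pairs and the height matching are exactly the computations you defer, so the proposal establishes neither $|R|=234$ nor the $54$/$72$ shell counts, which the paper obtains from the $R\leftrightarrow R'$ correspondence and an explicit lattice computation.
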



\section{Proofs}
\label{sec:main}


Throughout this paper, all cohomology groups are computed with respect to the {\'e}tale topology, the Galois cohomology being seen as a special instance.

We consider the following setting:

\begin{enumerate}
	\item $k$ is a perfect field, $\B$ is a smooth projective geometrically integral $k$-curve of genus $g_\B$, and $\ell\neq \car(k)$ is a prime number. We denote by $k(B)$ the function field of $B$. Given a place $v \in \B$, we denote by $k_v$ its residue field. Finally, we let $\overline{\B} := B \times_{\Spec(k)} \Spec(\kbar)$.
	\item $A$ is an abelian variety of dimension $d_A$ over $k(\B)$ whose $k(\B)/k$-trace vanishes.
	\item $\A\to \B$ is the N\'eron model of $A$ over $\B$.
	\item $\Sigma\subset \B$ is the set of places of bad reduction of $\A$.
	\item $\A[\ell]\to \B$ is the group scheme of $\ell$-torsion points of $\A$. The map $\A[\ell]\to \B$ is {\'e}tale, and its restriction to $\B\setminus \Sigma$ is finite of degree $\ell^{2d_A}$.
\end{enumerate}

If $v\in \Sigma$ is a place of bad reduction of $\A$, we denote by $\A_v:=\A\times_\B \Spec(k_v)$ the special fiber of $\A$ at $v$, by $\A_v^0$ the connected component of the identity in $\A_v$, and by $\Phi_v$ the component group of $\A_v$. By definition, we have an exact sequence for the {\'e}tale topology on $k_v$
\begin{equation}
	\label{eq:compgroup}
	\begin{CD}
		0 @>>> \A_v^0 @>>> \A_v @>>> \Phi_v @>>> 0, \\
	\end{CD}
\end{equation}
and $\Phi_v$ is a finite {\'e}tale group scheme over $k_v$.

By globalizing \eqref{eq:compgroup}, we obtain an exact sequence for the {\'e}tale topology on $\B$
$$
\begin{CD}
	0 @>>> \A^0 @>>> \A @>>> \Phi:=\bigoplus_{v\in \Sigma} (i_v)_*\Phi_v @>>> 0, \\
\end{CD}
$$
where $i_v:\Spec(k_v)\hookrightarrow B$ denotes the canonical inclusion.  By construction, $\A^0$ is the open subgroup of $\A$ whose fiber at each $v$ is $\A_v^0$. It is called the \emph{identity component} of $\A$.


\subsection{$\ell$-descent}
\label{subsec:l-descent}


\begin{lemma}\label{lem:l-descent}
	If the $k(B)/k$-trace of $A$ vanishes, then
	$$
	\rk_\Z A(k(\B)) \leq \dim_{\F_\ell} H^1(\B,\A[\ell]) + \dim_{\F_\ell} H^0(\B,\Phi/\ell\Phi).
	$$
\end{lemma}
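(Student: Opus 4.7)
The plan is a Kummer-style $\ell$-descent performed directly on the N\'eron model $\A$ itself, combined with the Lang-N\'eron theorem. Since the $k(B)/k$-trace of $A$ vanishes, Lang-N\'eron implies that $A(k(B))$ is finitely generated, so
$$\rk_\Z A(k(B)) \leq \dim_{\F_\ell} A(k(B))/\ell A(k(B)),$$
and the N\'eron mapping property gives $A(k(B)) = \A(B)$. It therefore suffices to bound $\dim_{\F_\ell} \A(B)/\ell\A(B)$.

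To analyze $[\ell]:\A\to \A$ as a map of sheaves for the {\'e}tale topology on $B$, I would apply the snake lemma to multiplication by $\ell$ on the short exact sequence
$$0 \to \A^0 \to \A \to \Phi \to 0.$$
The crucial step is that $[\ell]:\A^0\to \A^0$ is {\'e}tale surjective: smoothness reduces this to checking fiberwise, and on each geometric special fiber, which is an extension of an abelian variety by a torus and a unipotent group, multiplication by $\ell$ is surjective precisely because $\ell \neq \car(k)$. Combined with the identification of the kernel of $[\ell]$ on $\A$ with $\A[\ell]$, the snake lemma then produces the four-term exact sequence of {\'e}tale sheaves on $B$
$$0 \to \A[\ell] \to \A \xrightarrow{[\ell]} \A \to \Phi/\ell\Phi \to 0.$$

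Splitting this through the image sheaf $\A' := \mathrm{im}([\ell])$ into two short exact sequences and taking {\'e}tale cohomology yields the injections
$$\A'(B)/\ell\A(B) \hookrightarrow H^1(B,\A[\ell]), \qquad \A(B)/\A'(B) \hookrightarrow H^0(B,\Phi/\ell\Phi),$$
the first using that the image of $\A(B)\to \A'(B)$ is exactly $\ell\A(B)$. The filtration $\ell\A(B) \subseteq \A'(B) \subseteq \A(B)$ then gives
$$\dim_{\F_\ell} \A(B)/\ell\A(B) = \dim_{\F_\ell} \A'(B)/\ell\A(B) + \dim_{\F_\ell} \A(B)/\A'(B),$$
and summing the two preceding bounds yields the claim.

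The main obstacle is the {\'e}tale surjectivity of $[\ell]$ on $\A^0$. This does not follow from the standard fact that multiplication by $\ell$ is surjective on an abelian variety, since $\A^0$ is not proper: it genuinely requires the structure theory of the linear part of the bad special fibers together with the hypothesis $\ell \neq \car(k)$. The remaining manipulations are formal cohomological bookkeeping, and the novelty of the statement over a more naive descent on the generic fiber is precisely that, by working on the Néron model, the local contributions at the bad places are packaged into the single global term $H^0(B,\Phi/\ell\Phi)$.
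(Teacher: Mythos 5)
Your proposal is correct and follows essentially the same route as the paper: your image sheaf $\A' = \mathrm{im}([\ell])$ is exactly the paper's $\A^{\ell\Phi}$ (the preimage of $\ell\Phi$ under $\A\to\Phi$), and your two short exact sequences and the filtration $\ell\A(\B)\subseteq \A'(\B)\subseteq \A(\B)$ reproduce its descent bookkeeping verbatim. The only cosmetic difference is your justification of the key surjectivity of $[\ell]$ on $\A^0$ via the Chevalley decomposition of the special fibers, where the paper invokes the simpler general fact that multiplication by $\ell$ on a smooth connected group scheme in characteristic $\neq\ell$ is {\'e}tale (hence has open image) and therefore surjective by connectedness, so no structure theory of the linear part is actually needed.
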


\begin{proof}
This follows from the same arguments as in \cite[\S{}2.1]{gl22}. For the reader's convenience, we sketch the main steps.

By the universal property of the N{\'e}ron model, $\A(\B)/\ell\A(\B)=A(k(\B))/\ell A(k(\B))$. Since the trace of $A$ vanishes, it follows from the Lang-Néron theorem that $A(k(\B))$ is finitely generated, hence
$$
\rk_\Z A(k(\B)) \leq \dim_{\F_\ell} \A(\B)/\ell\A(\B).
$$
We now give an upper bound on the quantity on the right.

If $\A^{\ell\Phi}$ denotes the inverse image of $\ell\Phi$ by the quotient map $\A\to\Phi$, then we have by construction an exact sequence for the {\'e}tale topology on $\B$:
\begin{equation*}
\begin{CD}
0 @>>> \A^{\ell\Phi} @>>> \A @>>> \Phi/\ell\Phi @>>> 0. \\
\end{CD}
\end{equation*}
Applying global sections to this sequence, we obtain an exact sequence of abelian groups
$$
\begin{CD}
0 @>>> \A^{\ell\Phi}(\B) @>>> \A(\B) @>>> H^0(\B,\Phi/\ell\Phi). \\
\end{CD}
$$

Since $\ell\A(\B)$ is a subgroup of $\A^{\ell\Phi}(\B)$, modding out the first two groups by $\ell\A(\B)$ yields another exact sequence
\begin{equation}
\label{Ep_phi2}
\begin{CD}
0 @>>> \A^{\ell\Phi}(\B)/\ell\A(\B) @>>> \A(\B)/\ell\A(\B) @>>> H^0(\B,\Phi/\ell\Phi). \\
\end{CD}
\end{equation}

Finally, the map $[\ell]:\A^0_v\to \A^0_v$ is surjective for the {\'e}tale topology on $k_v$ (multiplication by $\ell$ on a smooth connected group scheme in characteristic $\neq \ell$ is {\'e}tale surjective). It follows that multiplication by $\ell$ on $\A$ induces an exact sequence for the {\'e}tale topology on $\B$
$$
\begin{CD}
0 @>>> \A[\ell] @>>> \A @>[\ell]>> \A^{\ell\Phi} @>>> 0. \\
\end{CD}
$$
Applying {\'e}tale cohomology to this sequence, we obtain an injective map
$$
\A^{\ell\Phi}(\B)/\ell\A(\B) \hookrightarrow H^1(\B,\A[\ell]).
$$
Combining this with \eqref{Ep_phi2} yields the upper bound on $\dim_{\F_\ell} \A(\B)/\ell\A(\B)$ we were looking for.
\end{proof}


\subsection{Galois equivariance of $H^1$}


In this section we shall prove, under the assumptions of Theorem~\ref{thm:main}~(ii), a Galois-equivariance property for $H^1(\B,\A[\ell])$.

\begin{lemma}
	\label{lem:H1GalInv}
	Assume that $H^0(\overline{\B},\A[\ell])=0=H^2(\overline{\B},\A[\ell])$. Then
	\begin{equation}
		\label{eq:H1GalInv}
		H^1(\B,\A[\ell]) = H^1(\overline{\B},\A[\ell])^{\Gal(\kbar/k)}
	\end{equation}
	and this group is a finite-dimensional $\F_\ell$-vector space.
\end{lemma}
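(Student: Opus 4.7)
The plan is to apply the Hochschild-Serre (equivalently, Leray) spectral sequence for the base change $\overline{\B}\to \B$ with coefficients in the \'etale sheaf $\A[\ell]$:
$$
E_2^{p,q} = H^p\bigl(\Gal(\kbar/k),\, H^q(\overline{\B},\A[\ell])\bigr) \;\Longrightarrow\; H^{p+q}(\B,\A[\ell]).
$$
From this I would read off the low-degree 5-term exact sequence
$$
0 \to E_2^{1,0} \to H^1(\B,\A[\ell]) \to E_2^{0,1} \xrightarrow{d_2} E_2^{2,0} \to H^2(\B,\A[\ell]).
$$
The hypothesis $H^0(\overline{\B},\A[\ell]) = 0$ kills the entire bottom row $E_2^{p,0} = H^p(\Gal(\kbar/k), 0) = 0$; in particular both outer terms $E_2^{1,0}$ and $E_2^{2,0}$ of the 5-term sequence vanish, and it collapses to the desired identification
$$
H^1(\B,\A[\ell]) \;\xrightarrow{\sim}\; E_2^{0,1} = H^1(\overline{\B},\A[\ell])^{\Gal(\kbar/k)}.
$$

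It is worth remarking that the second hypothesis, $H^2(\overline{\B},\A[\ell]) = 0$, plays no role in this particular identification; I would expect it to be invoked either in a companion statement about $H^2(\B,\A[\ell])$ or in the Euler-characteristic / Grothendieck-Ogg-Shafarevich comparison underlying the refinement claim of Theorem~\ref{thm:main}~(ii).

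For the finite-dimensionality, I would invoke the general finiteness theorem for \'etale cohomology of constructible torsion sheaves on a proper scheme over an algebraically closed field. Since $\A[\ell]\to \B$ is \'etale and finite of rank $\ell^{2d_A}$ over $\B\setminus \Sigma$, with \'etale fibers of smaller but still finite $\F_\ell$-rank at the finitely many points of $\Sigma$, its pullback to $\overline{\B}$ is a constructible $\F_\ell$-sheaf on a proper smooth $\kbar$-curve; hence $H^q(\overline{\B},\A[\ell])$ is a finite-dimensional $\F_\ell$-vector space for every $q$, and the same holds \emph{a fortiori} for its subspace of $\Gal(\kbar/k)$-invariants. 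The only substantive step is the spectral-sequence bookkeeping; this becomes immediate once one observes that the vanishing of $H^0(\overline{\B},\A[\ell])$ simultaneously kills both edge terms of the 5-term sequence, so the ``main obstacle'' is really just choosing the correct spectral sequence and pinning down which hypothesis does the work.
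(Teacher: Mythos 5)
Your proof is correct and takes essentially the same route as the paper: both arguments run the Hochschild--Serre/Leray spectral sequence $H^p(\Gal(\kbar/k),H^q(\overline{\B},\A[\ell]))\Rightarrow H^{p+q}(\B,\A[\ell])$ and appeal to the finiteness of \'etale cohomology of a constructible torsion sheaf on the proper curve $\overline{\B}$. Your side remark is also accurate: the hypothesis $H^2(\overline{\B},\A[\ell])=0$ is not needed for the identification of $H^1$ (vanishing of the bottom row of the spectral sequence suffices), and indeed the paper uses it only for the full degeneration and the companion identification $H^2(\B,\A[\ell])=H^1(k,H^1(\overline{\B},\A[\ell]))$ stated in the remark immediately after the lemma.
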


\begin{proof}
	Consider the Leray spectral sequence
	$$
	H^r(k,H^s(\overline{\B},\A[\ell])) \Rightarrow H^{r+s}(\B,\A[\ell]).
	$$
	
	The curve $\B$ being projective, and the sheaf $\A[\ell]$ being constructible, the groups $H^s(\overline{\B},\A[\ell])$ are finite-dimensional  $\F_\ell$-vector spaces, and are zero for $s>2$. Under the assumption that $H^0(\overline{\B},\A[\ell])=0=H^2(\overline{\B},\A[\ell])$, the above spectral sequence degenerates. This implies \eqref{eq:H1GalInv}, from which the last part of the statement follows.
\end{proof}

\begin{remark}
	Under the assumption of Lemma~\ref{lem:H1GalInv}, it also follows from the above Leray spectral sequence that we have
	$$
	H^2(\B,\A[\ell]) = H^1(k,H^1(\overline{\B},\A[\ell])).
	$$
\end{remark}

\begin{lemma}
	\label{lem:CohCond}
	The following conditions are equivalent:
	\begin{enumerate}
		\item[(i)] $H^0(\overline{\B},\A[\ell])=0=H^2(\overline{\B},\A[\ell])$;
		\item[(ii)] $H^0(\overline{\B},\A[\ell])=0=H^0(\overline{\B},\A^t[\ell])$;
		\item[(iii)] $A[\ell]^G =0= A[\ell]_G$, where $G=\Gal(\kbar(B)(A[\ell])/\kbar(\B))$ is the Galois group of the field of definition of $\ell$-torsion points of $A$.
	\end{enumerate}
\end{lemma}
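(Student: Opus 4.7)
The plan is to prove the chain $(i) \Leftrightarrow (ii) \Leftrightarrow (iii)$ by reducing each étale cohomology group to an explicit Galois-theoretic quantity, and then relating the two dual abelian varieties via the Weil pairing.

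First I would handle the $H^0$ terms. The sheaf $\A[\ell]$ is étale on $\overline{\B}$, and by the Néron mapping property its global sections are simply the $\ell$-torsion sections of the generic fiber, namely
\begin{equation*}
H^0(\overline{\B}, \A[\ell]) = A[\ell](\kbar(\B)) = A[\ell]^{\Gal(\overline{\kbar(\B)}/\kbar(\B))} = A[\ell]^G,
\end{equation*}
since $G$ is exactly the quotient of the absolute Galois group acting faithfully on $A[\ell]$. The same identification applies to $A^t$, giving $H^0(\overline{\B}, \A^t[\ell]) = A^t[\ell]^G$. This already matches the vanishing of $A[\ell]^G$ in (iii) with the first half of (i) and (ii).

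Next, I would use Poincaré duality to bridge (i) and (ii). Since $\overline{\B}$ is a smooth projective curve over the algebraically closed field $\kbar$ and $\ell$ is invertible, étale Poincaré duality applied to the constructible sheaf $\A[\ell]$ gives
\begin{equation*}
H^2(\overline{\B}, \A[\ell]) \cong H^0\bigl(\overline{\B}, \underline{\Hom}(\A[\ell], \mu_\ell)\bigr)^{\vee}.
\end{equation*}
The Weil pairing $A[\ell] \times A^t[\ell] \to \mu_\ell$ extends to a perfect pairing of étale sheaves $\A[\ell] \times \A^t[\ell] \to \mu_\ell$ on $\overline{\B}$: this is routine on the good reduction locus (Cartier duality of the $\ell$-torsion of an abelian scheme), and at bad places it follows from the duality of the identity components of the Néron models of $A$ and $A^t$, together with the $\ell$-divisibility of $\A^0$ (which makes $\A[\ell]$ insensitive to the component group pathologies that obstruct Grothendieck's full monodromy pairing). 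Hence $\underline{\Hom}(\A[\ell], \mu_\ell) \cong \A^t[\ell]$ and
\begin{equation*}
H^2(\overline{\B}, \A[\ell]) \cong H^0(\overline{\B}, \A^t[\ell])^{\vee} = (A^t[\ell]^G)^{\vee},
\end{equation*}
so that $H^2(\overline{\B}, \A[\ell])=0$ if and only if $H^0(\overline{\B}, \A^t[\ell])=0$, establishing $(i) \Leftrightarrow (ii)$.

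Finally, for $(ii) \Leftrightarrow (iii)$, I would use the Weil pairing on the generic fiber as a $G$-equivariant isomorphism $A^t[\ell] \cong \Hom_{\F_\ell}(A[\ell], \mu_\ell)$. Because $G$ is a quotient of $\Gal(\overline{\kbar(\B)}/\kbar(\B))$ and $\mu_\ell \subset \kbar$, the action of $G$ on $\mu_\ell$ is trivial, giving an isomorphism $A^t[\ell] \cong A[\ell]^{\vee}$ of $G$-modules. Taking $G$-invariants and using the standard linear-algebra identification $(A[\ell]^{\vee})^G = (A[\ell]_G)^{\vee}$ then yields $A^t[\ell]^G = 0 \Leftrightarrow A[\ell]_G = 0$, which together with Step 1 gives $(ii) \Leftrightarrow (iii)$. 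The main technical obstacle in the whole argument is the verification that the Weil pairing is perfect on Néron $\ell$-torsion at places of bad reduction, since this is where one could a priori worry about the failure of Grothendieck's monodromy pairing; it is, however, a familiar input, essentially because $\ell$ is prime to the residue characteristics and multiplication by $\ell$ is étale surjective on the identity component.
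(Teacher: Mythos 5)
Your proposal is correct and, at the level of structure, it is the same as the paper's: both directions are proved by (a) identifying $H^2(\overline{\B},\A[\ell])$ with $H^0(\overline{\B},\A^t[\ell])^\vee$ via duality on the curve, giving (i)$\Leftrightarrow$(ii), and (b) using the N\'eron mapping property ($H^0(\overline{\B},\A[\ell])=A[\ell]^G$), the $G$-equivariant Weil pairing with $\mu_\ell$ trivialized over $\kbar$, and the invariants/coinvariants duality $(M^\vee)^G\simeq(M_G)^\vee$ (which the paper isolates as Lemma~\ref{lem:inv-and-coinv}), giving (ii)$\Leftrightarrow$(iii). The only divergence is how step (a) is sourced. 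The paper quotes one packaged statement (Milne, p.~181, Prop.~2.2(b)): perfectness of $H^r(\overline{\B},j_*F)\times H^{2-r}(\overline{\B},j_*\check F)\to\Z/\ell\Z$ for $F$ locally constant on an open $j\colon U\hookrightarrow\overline{\B}$, applied with $U$ the good-reduction locus, using that $\A[\ell]$ and $\A^t[\ell]$ are the $j_*$-extensions (N\'eron models) of Cartier dual finite \'etale group schemes on $U$. You instead run general Poincar\'e--Verdier duality plus the global sheaf identification $\underline{\Hom}(\A[\ell],\mu_\ell)\simeq\A^t[\ell]$.

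Two cautions on your version of that step. First, ``Poincar\'e duality for constructible sheaves'' with the underived $\underline{\Hom}$ is false in general: for a skyscraper $i_*\Z/\ell\Z$ one has $H^0\neq 0$ while $\underline{\Hom}(i_*\Z/\ell\Z,\mu_\ell)=0$. The instance you use survives only because it is the extreme case: $H^2(\overline{\B},F)^\vee\simeq\mathbb{H}^0\bigl(\overline{\B},R\underline{\Hom}(F,\mu_\ell)\bigr)$, and in the hyperext spectral sequence the $\mathbb{H}^0$ term reduces to $H^0(\overline{\B},\underline{\Hom}(F,\mu_\ell))$ since all other contributions sit in negative degrees; either say this or cite the curve duality for $j_*$ of locally constant sheaves, as the paper does. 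Second, your justification at the bad fibers is not the right mechanism: the stalkwise bilinear pairing $\A[\ell]_{\bar x}\times\A^t[\ell]_{\bar x}\to\mu_\ell$ is genuinely degenerate when the inertia group $I_x$ acts nontrivially unipotently on $M=A[\ell]$ (the stalks are $M^{I_x}$ and $(M_{I_x})^\vee$, pairing through the natural map $M^{I_x}\to M_{I_x}$, which can vanish), and neither duality of identity components nor $\ell$-divisibility of $\A^0$ repairs this. The statement that is true, and is all you need, is that the induced map of sheaves $\A^t[\ell]\to\underline{\Hom}(\A[\ell],\mu_\ell)$ is an isomorphism: a homomorphism of sheaves over the strictly henselian trait at $\bar x$ is determined by an $I_x$-equivariant map on generic stalks, so both sides have stalk $\Hom_{I_x}(A[\ell],\mu_\ell)\simeq(A[\ell]_{I_x})^\vee$. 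With that replacement your argument closes and yields exactly the paper's key isomorphism $H^2(\overline{\B},\A[\ell])\simeq H^0(\overline{\B},\A^t[\ell])^\vee\simeq A[\ell]_G$, with the bad-fiber bookkeeping made explicit rather than delegated to the citation.
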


\begin{proof}
	The group schemes $\A[\ell]$ and $\A^t[\ell]$ are N\'eron models of their generic fibers, and are Cartier dual finite {\'e}tale group schemes over the good reduction locus of $\A\to \overline{\B}$. Therefore we have \cite[p.~181, Proposition~2.2(b)]{milne80} a perfect duality of finite-dimensional $\F_\ell$-vector spaces
	$$
	H^0(\overline{\B},\A^t[\ell]) \times H^2(\overline{\B},\A[\ell]) \rightarrow \Z/\ell\Z.
	$$
	This yields a canonical isomorphism
	\begin{equation}
		\label{eq:PoincareDuality}
		H^2(\overline{\B},\A[\ell]) \simeq H^0(\overline{\B},\A^t[\ell])^\vee
	\end{equation}
	where $M^\vee$ denotes the $\F_\ell$-dual of a finite dimensional $\F_\ell$-vector space $M$. Therefore, (i) is equivalent to (ii).
	
	Now, let $G=\Gal(\kbar(B)(A[\ell])/\kbar(\B))$. By the N\'eron mapping property we have that
	$$
	H^0(\overline{\B},\A[\ell]) = A[\ell]^G
	$$
	and similarly for $A^t[\ell]$.
	Since $\ell$ is invertible in $k$, the choice of an $\ell$-th root of unity induces an isomorphism $\mu_\ell\simeq \Z/\ell\Z$ over $\kbar$. Combining this with Cartier duality yields a perfect pairing
	$$
	A[\ell]\times A^t[\ell] \to \Z/\ell\Z
	$$
	which is $G$-equivariant. In particular, we have an isomorphism
	$$
	A^t[\ell]^G\simeq \Hom(A[\ell],\Z/\ell\Z)^G.
	$$
	It follows from Lemma~\ref{lem:inv-and-coinv} that
	$$
	A^t[\ell]^G\simeq (A[\ell]_G)^\vee.
	$$
	Combining this with \eqref{eq:PoincareDuality}, we obtain an isomorphism
	$$
	H^2(\overline{\B},\A[\ell]) \simeq A[\ell]_G,
	$$
	which is canonical (up to the choice of a $\ell$-th root of unity). Thus, (i) and (iii) are equivalent.
\end{proof}

\begin{lemma}
	\label{lem:inv-and-coinv}
	Let $M$ be a finite dimensional $\F_\ell$-vector space $M$ endowed with an action of some group $G$. Then $(M^\vee)^G\simeq (M_G)^\vee$.
\end{lemma}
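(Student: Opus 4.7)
The plan is to identify both sides via the universal property of coinvariants. First, I would recall that $M^\vee = \Hom_{\F_\ell}(M,\F_\ell)$ carries the contragredient $G$-action defined by $(g\cdot\phi)(m) = \phi(g^{-1}m)$; a functional $\phi \in M^\vee$ lies in $(M^\vee)^G$ if and only if $\phi(gm) = \phi(m)$ for every $g \in G$ and $m \in M$, equivalently, $\phi$ vanishes on the $\F_\ell$-subspace
$$
N := \langle gm - m : g \in G,\ m \in M \rangle_{\F_\ell} \subset M.
$$

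Next, since $M_G$ is by definition the quotient $M/N$, the universal property of the quotient yields a natural $\F_\ell$-linear bijection between functionals on $M$ annihilating $N$ and functionals on $M_G$. Composing the two identifications gives the desired isomorphism $(M^\vee)^G \simeq (M_G)^\vee$, which is moreover functorial in $M$.

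There is no serious obstacle: the argument is a direct application of elementary linear algebra and the universal property of coinvariants, and does not actually use the finite-dimensionality hypothesis (this is part of the context in which the lemma is invoked in Lemma~\ref{lem:CohCond}). Equivalently, one may view the statement as an instance of the adjunction $\Hom_{\F_\ell[G]}(M,N) \simeq \Hom_{\F_\ell}(M_G,N)$ applied to the trivial $G$-module $N = \F_\ell$, but the direct approach above makes the identification explicit on the level of functionals.
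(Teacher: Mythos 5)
Your proof is correct, and it takes a genuinely different route from the paper's. The paper argues categorically: the functor $M\mapsto M^\vee$ is a contravariant self-equivalence of the category of finite dimensional $\F_\ell[G]$-modules, and since $M^G\to M$ and $M\to M_G$ solve dual universal problems, they are exchanged by this duality --- a one-line conceptual argument, but one that genuinely invokes finite-dimensionality (the self-duality, i.e.\ $M\simeq M^{\vee\vee}$, fails for infinite-dimensional spaces). You instead identify both sides explicitly with the annihilator of the augmentation subspace $N=\langle gm-m\rangle$: a functional is $G$-invariant for the contragredient action precisely when it kills $N$, and functionals killing $N$ are exactly functionals on $M_G=M/N$ by the universal property of the quotient. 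This buys you two things the paper's proof does not: an isomorphism that is explicit on the level of functionals (which can be useful when one wants to track the pairing concretely, as in the Cartier-duality computation of Lemma~\ref{lem:CohCond}), and the observation that finite-dimensionality is not needed --- though, as you note, it is harmless in the application, where $M=A[\ell]$ is finite. Your closing reformulation via the adjunction $\Hom_{\F_\ell[G]}(M,\F_\ell)\simeq\Hom_{\F_\ell}(M_G,\F_\ell)$ is also accurate and is essentially the universal-property statement underlying the paper's argument, made concrete for the trivial module; in that sense your proof can be read as an unpacking of the paper's, but carried out at a more elementary level and in greater generality.
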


\begin{proof}
	The functor $M\mapsto M^\vee$ induces a self-duality of the category of finite dimensional $\F_\ell$-vector spaces. The same holds in the category of $\F_\ell[G]$-modules. Since $V^G\to V$ and $V\to V_G$ are solutions of dual universal problems, they are therefore exchanged by the functor $M\mapsto M^\vee$.
\end{proof}

\begin{remark}
	As pointed out in the introduction, under the assumption that the trace of $A$ vanishes, it follows from the Lang-Néron theorem that condition (ii) of Lemma~\ref{lem:CohCond} holds for all but finitely many $\ell$.
\end{remark}


\subsection{Proof of Theorem~\ref{thm:main}}
\label{subsec:comparison}


Since the $\kbar(\B)/\kbar$-trace of $A$ vanishes, the Grothendieck-Ogg-Shafarevich formula (see \cite[Th\'eor\`eme 3~(ii)]{raynaud95}) reads
$$
\rk_\Z A(\kbar(\B)) + r_0 = -\chi(\overline{\B},\A^0[\ell]),
$$
where $\chi(\overline{\B},\A^0[\ell])$ is the {\'e}tale Euler-Poincaré characteristic of the constructible sheaf $\A^0[\ell]$, and $r_0$ is the corank of the $\ell$-primary part of $H^1(\overline{\B},\A)$. These numbers do not depend on $\ell$. The integer $r_0$ being non-negative, one deduces that
\begin{equation}
	\label{eq:GOSbound}
	\rk_\Z A(\kbar(\B)) \leq -\chi(\overline{\B},\A^0[\ell]),
\end{equation}
which is the geometric bound \eqref{eq:geometricbound}. We shall now prove that, under the assumptions of Theorem~\ref{thm:main}~(ii), this bound is equal to the arithmetic one when $k$ is algebraically closed.

\begin{lemma}
	\label{lem:boundcomparison}
	Assume $H^0(\overline{\B},\A[\ell])=0=H^2(\overline{\B},\A[\ell])$. Then we have
	\begin{equation}
		\label{eq:boundcomparison}
		-\chi(\overline{\B},\A^0[\ell]) = \dim_{\F_\ell} H^1(\overline{\B},\A[\ell]) + \dim_{\F_\ell} H^0(\overline{\B},\Phi/\ell\Phi).
	\end{equation}
\end{lemma}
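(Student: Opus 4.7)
The plan is to relate the Euler characteristic of $\A^0[\ell]$ to that of $\A[\ell]$ via the identity component exact sequence, then use the hypothesis to read off $\chi(\overline{\B},\A[\ell])$ from $H^1$ alone, and finally note that the correction term is precisely $\dim_{\F_\ell} H^0(\overline{\B},\Phi/\ell\Phi)$.

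First I would apply the snake lemma to multiplication by $\ell$ on the short exact sequence of étale sheaves
\begin{equation*}
0 \to \A^0 \to \A \to \Phi \to 0
\end{equation*}
on $\overline{\B}$. Because $\ell$ is invertible in $k$ and $\A^0$ is smooth with connected fibers, multiplication by $\ell$ is étale surjective on $\A^0$, so $\A^0/\ell\A^0 = 0$. The snake lemma then splits into a short exact sequence
\begin{equation*}
0 \to \A^0[\ell] \to \A[\ell] \to \Phi[\ell] \to 0
\end{equation*}
together with an isomorphism $\A/\ell\A \simeq \Phi/\ell\Phi$. Additivity of the étale Euler--Poincaré characteristic on $\overline{\B}$ yields
\begin{equation*}
\chi(\overline{\B},\A^0[\ell]) = \chi(\overline{\B},\A[\ell]) - \chi(\overline{\B},\Phi[\ell]).
\end{equation*}

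Next I would compute the two right-hand terms separately. Since $\Phi = \bigoplus_{v\in\Sigma}(i_v)_*\Phi_v$ is a skyscraper sheaf supported on finitely many closed points of $\overline{\B}$, its higher cohomology vanishes, and so $\chi(\overline{\B},\Phi[\ell]) = \dim_{\F_\ell} H^0(\overline{\B},\Phi[\ell])$. For each finite abelian group $\Phi_v(\kbar)$, the structure theorem gives $\dim_{\F_\ell}\Phi_v[\ell] = \dim_{\F_\ell}\Phi_v/\ell\Phi_v$, and summing over $v \in \Sigma$ provides the identity
\begin{equation*}
\dim_{\F_\ell} H^0(\overline{\B},\Phi[\ell]) = \dim_{\F_\ell} H^0(\overline{\B},\Phi/\ell\Phi).
\end{equation*}
On the other hand, since $\overline{\B}$ is a curve, $H^s(\overline{\B},\A[\ell]) = 0$ for $s \geq 3$, so the hypothesis $H^0(\overline{\B},\A[\ell]) = 0 = H^2(\overline{\B},\A[\ell])$ implies
\begin{equation*}
\chi(\overline{\B},\A[\ell]) = -\dim_{\F_\ell} H^1(\overline{\B},\A[\ell]).
\end{equation*}

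Combining everything gives the desired equality
\begin{equation*}
-\chi(\overline{\B},\A^0[\ell]) = \dim_{\F_\ell} H^1(\overline{\B},\A[\ell]) + \dim_{\F_\ell} H^0(\overline{\B},\Phi/\ell\Phi).
\end{equation*}
The argument is essentially formal once the snake lemma is set up; the only small subtlety to double-check is the surjectivity of $[\ell]$ on $\A^0$ in the étale topology (which is exactly the fact invoked in the proof of Lemma~\ref{lem:l-descent}), and the fact that $\dim_{\F_\ell}\Phi[\ell] = \dim_{\F_\ell}\Phi/\ell\Phi$ as $\F_\ell$-vector spaces even though $\Phi[\ell]$ and $\Phi/\ell\Phi$ need not be isomorphic as Galois modules. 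These are the only real bookkeeping points; no deeper input is needed.
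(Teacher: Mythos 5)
Your proof is correct and follows essentially the same route as the paper: additivity of the \'etale Euler characteristic on $0 \to \A^0[\ell] \to \A[\ell] \to \Phi[\ell] \to 0$, the skyscraper computation $\chi(\overline{\B},\Phi[\ell]) = \dim_{\F_\ell} H^0(\overline{\B},\Phi[\ell]) = \dim_{\F_\ell} H^0(\overline{\B},\Phi/\ell\Phi)$ via the non-canonical isomorphism $M[\ell]\simeq M/\ell M$ for finite abelian $M$, and the vanishing of $H^s(\overline{\B},\A[\ell])$ for $s>2$ together with the hypothesis to get $-\chi(\overline{\B},\A[\ell]) = \dim_{\F_\ell} H^1(\overline{\B},\A[\ell])$. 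The only (harmless) difference is that you derive the torsion exact sequence from the snake lemma applied to $[\ell]$ on $0\to\A^0\to\A\to\Phi\to 0$, using the \'etale surjectivity of $[\ell]$ on $\A^0$, whereas the paper simply asserts that sequence; your justification is exactly the fact the paper invokes in its proof of Lemma~\ref{lem:l-descent}.
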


\begin{proof}
	The $\Phi_v$ are finite constant group schemes over $\kbar$. Since we have, for any finite abelian group $M$, a (non-canonical) isomorphism $M/\ell M \simeq M[\ell]$, one deduces that
	$$
	\dim_{\F_\ell} H^0(\overline{\B},\Phi/\ell\Phi) = \dim_{\F_\ell} H^0(\overline{\B},\Phi[\ell]).
	$$
	Therefore, \eqref{eq:boundcomparison} is equivalent to
	$$
	-\chi(\overline{\B},\A^0[\ell]) = \dim_{\F_\ell} H^1(\overline{\B},\A[\ell]) + \dim_{\F_\ell} H^0(\overline{\B},\Phi[\ell]).
	$$
	
	On the other hand, the exact sequence of {\'e}tale sheaves
$$
\begin{CD}
0 @>>> \A^0[\ell] @>>> \A[\ell] @>>> \Phi[\ell] @>>> 0 \\
\end{CD}
$$
	yields
	\begin{equation}
		\label{eq:EulerRelation}
		\chi(\overline{\B},\A^0[\ell]) - \chi(\overline{\B},\A[\ell]) + \chi(\overline{\B},\Phi[\ell]) = 0.
	\end{equation}
	Since $\Phi[\ell]$ is a skyscraper sheaf, we have
	$$
	\chi(\overline{\B},\Phi[\ell]) = \dim_{\F_\ell} H^0(\overline{\B},\Phi[\ell]).
	$$
	Since $\overline{\B}$ is a smooth curve over an algebraically closed field and $\A[\ell]$ is an $\ell$-torsion constructible sheaf over $\overline{\B}$, it is well known that $H^i(\overline{\B},\A[\ell]) = 0$ for $i > 2$ (see \cite[p.221, VI.1.1]{milne80}). Combining this with the assumption $H^0(\overline{\B},\A[\ell])=0=H^2(\overline{\B},\A[\ell])$, it follows that
	$$
	- \chi(\overline{\B},\A[\ell]) = \dim_{\F_\ell} H^1(\overline{\B},\A[\ell]),
	$$
	which concludes the proof.
\end{proof}

\begin{proof}[Proof of Theorem~\ref{thm:main}]
	The bound \eqref{eq:arithmeticbound} holds according to Lemma~\ref{lem:l-descent}. Note that $H^0(\B,\Phi/\ell\Phi)$ is finite and is equal to the $\Gal(\kbar/k)$-invariants of its counterpart computed over $\kbar$.
	
	Assume now that $A$ and its dual abelian variety $A^t$ have no rational $\ell$-torsion points over $\kbar(\B)$. Then according to Lemma~\ref{lem:CohCond}, we have that $H^0(\overline{\B},\A[\ell])=0=H^2(\overline{\B},\A[\ell])$. It then follows from Lemma~\ref{lem:H1GalInv} that $H^1(\B,\A[\ell])$ is finite and is equal to the $\Gal(\kbar/k)$-invariants of its counterpart computed over $\kbar$. Finally, Lemma~\ref{lem:boundcomparison} proves that the bound \eqref{eq:arithmeticbound} is a refinement of the geometric bound, which concludes the proof.
\end{proof}

\begin{remark}
\label{remark:H0and1}
 \begin{enumerate}
 \item Assuming that $A$ and $A^t$ have trivial $\kbar(B)$-rational $\ell$-torsion subgroups is important in the proof of Theorem~\ref{thm:main}. Indeed, if $k$ is a number field, the group $H^1(\B,\A[\ell])$ may be infinite as observed in \cite[Remark~1.5]{gl22}.
 \item If $k$ is a finite field, then $H^1(\B,\A[\ell])$ is closely related to the $\ell$-Selmer group of $A$ over $k(B)$. See for example \cite{cesnavicius16}.
 \item In the case when $A$ is an elliptic curve, one proves \cite[Lemma 2.1]{gl22} that
	$$
	\dim_{\F_\ell} H^0(\B,\Phi/\ell\Phi) = \dim_{\F_\ell} H^0(\B,\Phi[\ell]).
	$$
	This is due to the particular structure of the $\Phi_v$, whose underlying abelian group is either cyclic, or isomorphic to $(\Z/2\Z)^2$. In either case, there exists an isomorphism $\Phi_v/\ell\Phi_v \simeq \Phi_v[\ell]$ which respects the Galois action.
 \end{enumerate}
\end{remark}


\subsection{Cohomology of $\A[\ell]$ and Picard groups of curves}


Let $K$ be a field, and let $(A,\lambda)$ be an abelian variety $A$ of dimension $d_A$ defined over $K$, together with a polarization $\lambda: A \to A^t$ defined over $K$.

For each prime number $\ell \neq \car(K)$ which does not divide the degree of $\lambda$, the composition of the pairing induced by Cartier duality between $A[\ell]$ and $A^t[\ell]$ and the polarization $\lambda$ yields a nondegenerate, alternating and $\Gal(\Kbar/K)$-equivariant pairing
$$
e_\ell^\lambda : A[\ell] \times A[\ell] \to \mu_\ell.
$$
This gives rise to an injective Galois representation
\begin{equation}\label{ell-adic_representation}
\rho_\ell : \Gal(K(A[\ell])/K) \hookrightarrow \GSP_{2d_A}(\F_\ell),
\end{equation}
where $\GSP_{2d_A}(\F_\ell)$ denotes the group of symplectic similitudes. If we let $\SP_{2d_A}(\F_\ell)$ be the corresponding symplectic group, then we have a short exact sequence of groups
$$
\begin{CD}
	1 @>>> \SP_{2d_A}(\F_\ell) @>>> \GSP_{2d_A}(\F_\ell) @>\mathrm{mult.}>> \F_\ell^\times @>>> 1,
\end{CD}
$$
and it is well known that the composition of $\rho_\ell$ with the multiplier map above is the cyclotomic character. In particular, if $K$ contains $\mu_\ell$, then the representation $\rho_\ell$ factors through $\SP_{2d_A}(\F_\ell)$.

We say that $A$ has \emph{big monodromy} if, for $\ell$ large enough, the image of $\rho_\ell$ contains the symplectic group $\SP_{2d_A}(\F_\ell)$. When $K=k(\B)$ is the function field of our curve $\B$, we say that $A$ has \emph{big geometric monodromy} if, for $\ell$ large enough, the image of $\rho_\ell$ is exactly $\SP_{2d_A}(\F_\ell)$ over $\kbar(\B)$. These properties of having big monodromy implicitly use the choice of a polarization, but do not depend on this choice.

We now return to our running assumptions: $A$ is an abelian variety over $k(\B)$, whose $k(\B)/k$-trace vanishes, and $\ell\neq \car(k)$ is a prime.

The aim of this section is to prove the following:

\begin{theorem}
	\label{thm:big_monodromy}
	Let $A$ be a polarized abelian variety over $k(\B)$ whose $k(\B)/k$-trace vanishes, and let $\ell\neq \car(k)$ be an odd prime which does not divide the degree of the polarization. Assume that the image of the Galois representation $\rho_\ell$ over $\kbar(\B)$ is the full symplectic group $\SP_{2d_A}(\F_\ell)$. Then the equivalent assumptions of Lemma~\ref{lem:CohCond} hold, and we have an injective map
	$$
	H^1(\B,\A[\ell]) \hookrightarrow \ker(N_{\X/\X^+}:\Pic(\X)[\ell] \to \Pic(\X^+)[\ell]),
	$$
	where $\X\to \B$ is the (geometrically  irreducible) cover of smooth projective $k$-curves with generic fiber $A[\ell]\setminus\{0\}$, and $\X\to \X^+$ is the quotient of $\X$ by the involution $P\mapsto -P$.
\end{theorem}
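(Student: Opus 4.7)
My plan is to use the Weil pairing, in conjunction with big geometric monodromy, to embed $\A[\ell]$ into the anti-invariant subsheaf of $\pi_{*}\mu_{\ell}$ on $B$, then identify the $H^{1}$ of the target with $\ker(N_{\X/\X^{+}})$ on Picard groups via Kummer theory. I start with the preliminaries: let $G := \Gal(\kbar(B)(A[\ell])/\kbar(B)) = \SP_{2d_A}(\F_\ell)$ by hypothesis. The standard symplectic representation is absolutely irreducible and non-trivial, so $A[\ell]^{G} = 0$; since $\mu_{\ell}\subset\kbar$, the Weil pairing makes $A[\ell]$ self-dual as a $G$-module, and hence $A[\ell]_{G} \cong (A[\ell]^{G})^{\vee} = 0$, which verifies the equivalent conditions of Lemma~\ref{lem:CohCond}. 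Transitivity of $G$ on nonzero vectors shows that $A[\ell]\setminus\{0\}$ is geometrically irreducible over $k(B)$, and $\pi:\X\to B$ is defined as its smooth projective model; the involution $[-1]$ descends to $\sigma$ on $\X$ with quotient $q:\X\to\X^{+}$.

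Next I would construct the map and identify the target. The pairing $P\mapsto e_{\ell}(P,-)$ yields a $G$-equivariant injection $A[\ell]\hookrightarrow V^{-}:=(\mu_{\ell}^{A[\ell]\setminus\{0\}})^{-}$ (anti-invariants under $Q\mapsto -Q$): well-defined because $e_{\ell}(P,-Q) = e_{\ell}(P,Q)^{-1}$, injective by non-degeneracy of $e_{\ell}$. This extends to a sheaf injection $\A[\ell]\hookrightarrow(\pi_{*}\mu_{\ell})^{-}$ on $B$. For the target, since $\ell$ is odd one has a canonical splitting $q_{*}\mu_{\ell} = \mu_{\ell}\oplus(q_{*}\mu_{\ell})^{-}$, on which the norm $q_{*}\mu_{\ell}\to\mu_{\ell}$ is squaring (an isomorphism) on the first factor and trivial on the second, giving a short exact sequence $0\to(q_{*}\mu_{\ell})^{-}\to q_{*}\mu_{\ell}\xrightarrow{N}\mu_{\ell}\to 0$ on $\X^{+}$. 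Its long exact sequence, combined with the Kummer isomorphisms $H^{1}(\X,\mu_{\ell})=\Pic(\X)[\ell]$, $H^{1}(\X^{+},\mu_{\ell})=\Pic(\X^{+})[\ell]$, the surjectivity of $N$ on $H^{0}$ (squaring on $\mu_{\ell}(k)$), and Leray for the finite map $\pi^{+}:\X^{+}\to B$, yields
$$H^{1}(B,(\pi_{*}\mu_{\ell})^{-}) \;\cong\; \ker\!\bigl(N_{\X/\X^{+}}:\Pic(\X)[\ell]\to\Pic(\X^{+})[\ell]\bigr).$$

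The main obstacle is showing injectivity of the induced map $H^{1}(B,\A[\ell])\to H^{1}(B,(\pi_{*}\mu_{\ell})^{-})$. Letting $\mathcal{C}$ denote the cokernel of $\A[\ell]\hookrightarrow(\pi_{*}\mu_{\ell})^{-}$, this kernel is $H^{0}(B,\mathcal{C})$ modulo the image of $H^{0}(B,(\pi_{*}\mu_{\ell})^{-})$. The latter vanishes: $H^{0}(\overline{B},(\pi_{*}\mu_{\ell})^{-}) = H^{0}(\overline{\X},\mu_{\ell})^{-} = \mu_{\ell}[2] = 0$ for $\ell$ odd, hence $H^{0}(B,(\pi_{*}\mu_{\ell})^{-}) = 0$. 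So one must establish $H^{0}(B,\mathcal{C}) = 0$, which via Galois-invariants reduces to the representation-theoretic vanishing $Q^{G} = 0$ with $Q := V^{-}/A[\ell]$. Taking $G$-invariants of the generic sequence $0 \to A[\ell] \to V^{-} \to Q \to 0$ and using $A[\ell]^{G} = 0 = (V^{-})^{G}$ (the latter because a $G$-invariant element of $V^{-}$ must be a constant $c \in \mu_{\ell}$ with $c = c^{-1}$) gives $Q^{G}\hookrightarrow H^{1}(G,A[\ell])$. I would conclude either by invoking the vanishing of $H^{1}(\SP_{2d_A}(\F_\ell),\F_\ell^{2d_A})$ for the odd $\ell$ at hand, or by describing the image of $A[\ell]$ in $V^{-}$ as the subspace of \emph{linear} (Weil-paired) anti-symmetric functions and analyzing the $G$-action on the ``higher-degree'' complement. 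This mirrors the strategy of \cite{gl22} in the elliptic case.
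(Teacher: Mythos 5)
Your proposal is correct, and its skeleton matches the paper's: the Weil pairing against the tautological point gives the embedding, transitivity of $\SP_{2d_A}(\F_\ell)$ on nonzero vectors gives geometric irreducibility of $\X$ and verifies condition (iii) of Lemma~\ref{lem:CohCond}, and everything hinges on $H^1(\SP_{2d_A}(\F_\ell),\F_\ell^{2d_A})=0$, which is exactly the paper's Lemma~\ref{lem:h1=0} (proved there in three lines by inflation--restriction through the central subgroup $\{\pm I\}$ --- your first proposed ending is the right one; the vague alternative about ``higher-degree'' complements is unnecessary). Where you genuinely diverge is in the packaging of the injectivity and of the target. The paper (Lemmas~\ref{lem:H1picard} and~\ref{lem:h1w-inj}) maps $\A[\ell]$ into all of $\Res_{\X/\B}\mu_\ell$, proves injectivity of $h^1w$ over $\kbar$ by restricting to the generic fiber and running Galois cohomology of the sequence $0\to A[\ell]\to \Res\,\mu_\ell\to Q\to 0$, then descends to $k$ via the degenerating Leray spectral sequence of Lemma~\ref{lem:H1GalInv}; the anti-invariance $e_\ell^\lambda(P,T)e_\ell^\lambda(-P,T)=1$ is only invoked at the very end to project to $\X^+$. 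You instead embed directly into the anti-invariant subsheaf $(\pi_*\mu_\ell)^-$ and argue entirely on $B$: the kernel of the induced $H^1$ map is controlled by $H^0(B,\mathcal{C})$, which you bound by $Q^G\hookrightarrow H^1(G,A[\ell])=0$ after checking $(V^-)^G=0$ (your constancy-plus-$c=c^{-1}$ computation is sound, and is the sheaf-level avatar of the paper's surjectivity of $\mu_\ell(k(X))\to Q(k(\B))$). This buys you two things: you avoid the base change to $\kbar$ and Lemma~\ref{lem:H1GalInv} altogether, and you get an exact identification $H^1(B,(\pi_*\mu_\ell)^-)\cong\ker(N_{\X/\X^+})$ rather than a mere injection. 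Two caveats you should make explicit. First, the reduction of $H^0(B,\mathcal{C})$ to $Q^G$ silently uses that $\A[\ell]$ and $(\pi_*\mu_\ell)^-$ are the $j_*$ (N\'eron-model) extensions of their generic fibers, so that left-exactness of $j_*$ gives $\mathcal{C}\hookrightarrow j_*Q$ and rules out skyscraper sections of the cokernel at bad places; this is the same N\'eron-property input the paper uses. Second, the ``Kummer isomorphisms'' $H^1(\X,\mu_\ell)=\Pic(\X)[\ell]$ hold only over $\kbar$; over general $k$ there is a kernel $k^\times/(k^\times)^\ell$. Your construction survives this because the norm for the degree-$2$ cover acts as squaring on constants and $\gcd(2,\ell)=1$, so constants inject under (and are hit by) the norm and the discrepancy cancels in your split sequence --- but as written the step is too glib and should be spelled out, whereas the paper sidesteps the issue with the commutative diagram comparing the situations over $k$ and $\kbar$.
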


\begin{remark}
	\label{rmk:big_monodromy}
	\begin{enumerate}
		\item Theorem~\ref{thm:big_monodromy} is a higher dimensional analogue of \cite[Theorem~1.1,~1)]{gl22}. By combining it with Theorem~\ref{thm:main}, one obtains a bound for the rank of $A$ over $k(\B)$.
		\item When $A=E$ is a non-isotrivial elliptic curve over $k(\B)$, Igusa \cite[Theorem~3]{Igusa59} proved that, for all but finitely many $\ell$, the image of the Galois representation $\Gal(\overline{k(\B)}/\overline{k}(\B))\to \GL_2(\F_\ell)$ attached to $E[\ell]$ is $\SL_2(\F_\ell)=\SP_2(\F_\ell)$. Moreover, Cojocaru and Hall \cite[Theorem~1]{Cojocaru_Hall_2005} gave a precise bound $M_\B$, depending only on the genus of $\B$, such that for all primes $\ell> M_\B$, the image of the above Galois representation is $\SL_2(\F_\ell)$.
		\item Assume $k$ is a number field. If $\End(A)=\Z$ and $\dim(A)=2,6$ or is odd, then it was proved by Serre \cite[Lettre à Marie-France Vignéras du 10/2/1986]{serreIV} that $A$ has big monodromy. If $A$ is not isotrivial, then it follows from \cite[Proposition~4.1]{LST2019} that $A$ has big geometric monodromy.
		\item Assume $k$ is a finitely generated field, $\End(A)=\Z$ and $A$ has semistable reduction of toric dimension one at some closed point of $\B$. Then it was proved by Arias-de-Reyna, Gajda and Petersen \cite[Theorem~3.6]{AriasdeReyna_Gajda_Petersen_2013} that $A$ has big monodromy, extending a result by Hall in the global field case \cite[Theorem~1 and \S{}3, Remark]{Hall_2011}. If $A$ is not isotrivial, one deduces as previously that $A$ has big geometric monodromy.
		\item Although the assumptions of Theorem~\ref{thm:big_monodromy} may seem strong, these are generically satisfied. Indeed, it follows from the work of Deligne and Mumford \cite[5.12]{DM1969} that the moduli space of genus $g$ curves has big geometric monodromy. More precisely, the image of the adelic Galois representation attached to the Jacobian of a generic curve of genus $g$ (over $\kbar$) is the full symplectic group $\SP_{2g}(\hat{\Z})$. Principally polarized abelian varieties of dimension $d$ share the same property, as pointed out in \cite[Theorem~5.4]{LST2019}.
	\end{enumerate}
\end{remark}

The following lemma is a key step in proving Theorem~\ref{thm:big_monodromy}.

\begin{lemma}
	\label{lem:H1picard}
	Assume that $A$ has a polarization, defined over $k(\B)$, of degree coprime to $\ell$. Let $X\subset A[\ell]$ be a reduced closed $k(\B)$-subscheme, which is irreducible over $\kbar(\B)$. Assume that
	\begin{enumerate}
		\item[(1)] $X$ generates $A[\ell]$ as a group;
		\item[(2)] $A[\ell]$ has no nonzero rational point over $\kbar(\B)$;
		\item[(3)] $H^1(\Gal(\kbar\Kell/\kbar(\B)),A[\ell])=0$, where $\Kell$ is the $\ell$-th division field of $A$.
	\end{enumerate}
	Then the equivalent assumptions of Lemma~\ref{lem:CohCond} hold, and we have an injective map
	$$
	H^1(\B,\A[\ell]) \hookrightarrow \ker(N_{\X/\B}:\Pic(\X)[\ell] \to \Pic(\B)[\ell]),
	$$
	where $\X\to \B$ is the (geometrically irreducible) cover of smooth projective $k$-curves with generic fiber $X$.
Finally, if one replaces the first assumption above by
	\begin{enumerate}
		\item[(1')] $\ell$ is odd, $X$ generates $A[\ell]$ as a group, and is stable under the map $P\mapsto -P$,
	\end{enumerate}
then one can improve the conclusion as follows: we have an injective map
	$$
	H^1(\B,\A[\ell]) \hookrightarrow \ker(N_{\X/\B}:\Pic(\X)[\ell] \to \Pic(\X^+)[\ell]),
	$$
	where $\X\to \X^+$ is the quotient of $\X$ by the involution $P\mapsto -P$.
\end{lemma}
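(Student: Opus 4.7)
My plan is to build the injection by Cartier-dualizing a natural summation map of étale sheaves on $\B$. The inclusion $X\hookrightarrow A[\ell]$, together with hypothesis (1) that $X$ generates $A[\ell]$, gives a surjection
\[
\psi:\pi_{*}(\F_{\ell})_{\X}\twoheadrightarrow \A[\ell],\qquad (n_{P})_{P}\mapsto \sum_{P\in X} n_{P}\cdot P,
\]
defined on the étale locus of $\pi:\X\to \B$. Since $\ell\nmid\deg\lambda$, the polarization induces a Galois-equivariant isomorphism $\lambda[\ell]:A[\ell]\simeq A^{t}[\ell]$, identifying $\A[\ell]$ with its own Cartier dual. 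Dualizing $\psi$ then yields an injection $\A[\ell]\hookrightarrow \pi_{*}(\mu_{\ell})_{\X}$; combined with (2) and $\lambda[\ell]$, this shows $A[\ell]^{G_{\kbar(\B)}}=A^{t}[\ell]^{G_{\kbar(\B)}}=0$, verifying condition (ii) of Lemma~\ref{lem:CohCond}. Passing to étale cohomology over $\B$ and using $H^{i}(\B,\pi_{*}\mu_{\ell})=H^{i}(\X,\mu_{\ell})$ (valid because $\pi$ is finite), together with the Kummer sequence $0\to k^{*}/\ell\to H^{1}(\X,\mu_{\ell})\to \Pic(\X)[\ell]\to 0$ on the smooth projective geometrically integral curve $\X$, produces the candidate map $H^{1}(\B,\A[\ell])\to \Pic(\X)[\ell]$.

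To prove the image lands in $\ker(N_{\X/\B})$, I would note that the trace $\mathrm{tr}:\pi_{*}\mu_{\ell}\to \mu_{\ell}$ is Cartier dual to the diagonal $\F_{\ell}\hookrightarrow \pi_{*}\F_{\ell}$, and that the composition $\F_{\ell}\to \pi_{*}\F_{\ell}\xrightarrow{\psi}\A[\ell]$ sends $1$ to $\sum_{P\in X}P$. This orbit sum is $G_{\kbar(\B)}$-invariant, hence vanishes by (2); dually $\mathrm{tr}\circ(\A[\ell]\hookrightarrow \pi_{*}\mu_{\ell})=0$, and since $\mathrm{tr}$ induces $N_{\X/\B}$ on $H^{1}$ the claim follows. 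Under (1'), the hypothesis that $X$ is stable under $P\mapsto -P$ produces an involution $\sigma:\X\to \X$ with quotient $\X^{+}$; the Cartier-dual map is a homomorphism of sheaves of abelian groups, hence intertwines $-1$ on $\A[\ell]$ with $\sigma^{*}$ on $\pi_{*}\mu_{\ell}$, and the image lies in the $(-1)$-eigenspace of $\sigma^{*}$. As $\ell$ is odd and coprime to $\deg(\X/\X^{+})=2$, the relation $(\pi^{+})^{*}\circ N_{\X/\X^{+}}=1+\sigma^{*}$ on $\Pic(\X)[\ell]$ together with the injectivity of $(\pi^{+})^{*}$ on $\Pic(\X^{+})[\ell]$ shows that this eigenspace is annihilated by $N_{\X/\X^{+}}$, giving the refined target.

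The main obstacle is proving injectivity of the constructed map. Letting $Q$ denote the cokernel of $\A[\ell]\hookrightarrow \pi_{*}\mu_{\ell}$, the long exact sequence reads
\[
H^{0}(\B,\pi_{*}\mu_{\ell})=\mu_{\ell}(k)\longrightarrow H^{0}(\B,Q)\longrightarrow H^{1}(\B,\A[\ell])\longrightarrow H^{1}(\X,\mu_{\ell}),
\]
so injectivity amounts to surjectivity of the first arrow. With Lemma~\ref{lem:CohCond} now verified, Lemma~\ref{lem:H1GalInv} lets me reduce to the corresponding statement over $\overline{\B}$, where the Galois action factors through the geometric monodromy group $G=\Gal(\kbar\Kell/\kbar(\B))$. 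There the exact sequence $0\to A[\ell]\to \mu_{\ell}[X]\to Q\to 0$ of $G$-modules, combined with $A[\ell]^{G}=0$ from (2) and $H^{1}(G,A[\ell])=0$ from (3), forces $(\mu_{\ell}[X])^{G}\twoheadrightarrow Q^{G}$, which is exactly what is needed. Finally, $A[\ell]^{G}=0$ also implies that the image of $H^{1}(\B,\A[\ell])$ in $H^{1}(\X,\mu_{\ell})$ meets the kernel $k^{*}/\ell$ of $H^{1}(\X,\mu_{\ell})\twoheadrightarrow \Pic(\X)[\ell]$ only at zero, so the projection to $\Pic(\X)[\ell]$ remains injective.
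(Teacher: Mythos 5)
Your construction is, modulo presentation, the same as the paper's: the Cartier dual of the orbit--summation map $\pi_*\F_\ell\twoheadrightarrow\A[\ell]$ is precisely the paper's Weil-pairing morphism $w:P\mapsto e_\ell^\lambda(P,T)$, with $T$ the generic point of $X$; your verification of Lemma~\ref{lem:CohCond}, your norm argument (the Galois-invariant orbit sum $\sum_{P\in X}P$ vanishes by (2)), and your treatment of (1') (the image is inverted by $\sigma^*$, and since $(\pi^+)^*\circ N_{\X/\X^+}=1+\sigma^*$ with $(\pi^+)^*$ injective on $\ell$-torsion for $\ell$ odd, the norm kills it) all coincide with the paper's, the last in slightly expanded but equivalent form. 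Likewise your final step, that the image in $H^1(\X,\mu_\ell)$ meets the constant classes $k^\times/(k^\times)^\ell$ trivially, is the paper's commutative-diagram argument: it follows from injectivity over $\kbar$ together with $H^1(\B,\A[\ell])=H^1(\overline{\B},\A[\ell])^{\Gal(\kbar/k)}$ (Lemma~\ref{lem:H1GalInv}), not directly from $A[\ell]^G=0$ as you state, though you have both ingredients in hand.

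There is, however, one genuine gap, located exactly where you diverge from the paper: the injectivity step. You form the sheaf-level exact sequence $0\to\A[\ell]\to\pi_*\mu_\ell\to Q\to 0$ on $\overline{\B}$ and then silently replace $H^0(\overline{\B},Q)$ by the Galois invariants $Q^G$ of the generic fiber. These need not agree a priori: $Q$ is the cokernel of a map of constructible sheaves, and could in principle carry sections supported at the bad points, in which case surjectivity of $(\mu_\ell[X])^G\twoheadrightarrow Q^G$ would not yield the needed surjectivity of $\mu_\ell(\kbar)\to H^0(\overline{\B},Q)$. The identification can be repaired: since $\A[\ell]$ and $\pi_*\mu_\ell$ are direct images $j_*$ of their generic fibers (the N\'eron property for $\A[\ell]$; normality for $\mu_\ell$ on $\X$), their stalks at a geometric point over $v$ are the $I_v$-invariants, and the standard inclusion of the cokernel of invariants into the invariants of the cokernel gives $Q\hookrightarrow j_*Q_{\mathrm{gen}}$, hence $H^0(\overline{\B},Q)\hookrightarrow Q^G$, after which your computation suffices. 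The paper sidesteps this entirely in Lemma~\ref{lem:h1w-inj} by first restricting to the generic fiber --- $H^1(\overline{\B},\A[\ell])\to H^1(\kbar(\B),A[\ell])$ is injective because $\A[\ell]$ is a N\'eron model --- and then running the cokernel argument purely in Galois cohomology. A related, smaller gloss: you define $\psi$ only on the \'etale locus of $\pi$ and then use the embedding $\A[\ell]\hookrightarrow\pi_*\mu_\ell$ over all of $\B$; the extension again rests on the direct-image description of the three sheaves involved, which is the point the paper makes explicit before equation \eqref{eq:embeddingA[ell]}. With these two justifications supplied, your proof is complete and essentially coincides with the paper's.
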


Let us observe that, since $X$ is irreducible over $\kbar(\B)$---hence over $k(\B)$---it consists of a single orbit under the action of $\Gal(\Kell/k(\B))$. As $X$ generates $A[\ell]$, the field of definition of all points of $X$ contains $\Kell$, hence is equal to it. Therefore, $\Kell$ is the Galois closure of $k(\X)/k(\B)$.

\begin{remark}
It follows from the work of Guralnick \cite[Theorem~A]{Guralnick99} that, if $A[\ell]$ is irreducible as a Galois module over $\overline{k}(\B)$ and if $\ell>2g+2$, then assumption (3) is satisfied (and also (2) for obvious reasons). More generally, (3) holds if $A[\ell]$ is semi-simple as a Galois module and if $\ell>2g+2$, which is an effective version of a classical result of Nori \cite[Theorem E]{Nori87}. We refer the reader to \cite{PL2023} for detailed proofs of these facts.
\end{remark}

\begin{proof}[Proof of Lemma~\ref{lem:H1picard}]
	Since $A^t[\ell]\simeq A[\ell]$, assumption (2) implies that the conditions of Lemma~\ref{lem:CohCond} are satisfied.
	The Weil pairing, composed with the isomorphism $A^t[\ell]\simeq A[\ell]$ induced by the polarization, induces a morphism of finite {\'e}tale $k(\B)$-group schemes (equivalently of Galois modules over $k(\B)$)
	\begin{equation}
		\label{eq:w:A[ell]}
		w: A[\ell] \to \Res_{k(X)/k(\B)} \mu_\ell, \quad P\mapsto e_\ell^\lambda(P,T)
	\end{equation}
	where $T$ denotes the generic point of $X$.
	This morphism is injective, because the Weil pairing is non-degenerate and $X$ generates $A[\ell]$ as a group. Moreover the sum of all points of $X$, which is the sum of all elements in a Galois orbit, is defined over $k(\B)$ hence is zero since $A[\ell]$ has no nonzero rational point over $\kbar(\B)$. It follows that the image of $w$ lies in the kernel of the norm map
	$$
	N_{k(X)/k(\B)} : \Res_{k(X)/k(\B)} \mu_\ell \to \mu_\ell.
	$$
	
	Since $\A[\ell]$, $\mu_\ell$ and $\Res_{\X/\B} \mu_\ell$ are Néron models of their generic fibers, it follows from the previous discussion that $w$ induces a morphism of {\'e}tale $\B$-group schemes (that we also denote by $w$)
	\begin{equation}
		\label{eq:embeddingA[ell]}
		w:\A[\ell] \hookrightarrow \Res_{\X/\B} \mu_\ell
	\end{equation}
	whose image is contained in the kernel of the norm map $N_{\X/\B}:\Res_{\X/\B} \mu_\ell \to \mu_\ell$.
	This induces a map $h^1w: H^1(\B,\A[\ell]) \rightarrow H^1(\X,\mu_\ell)$.
	Now, consider the commutative diagram
	$$
	\begin{CD}
		H^1(\B,\A[\ell]) @>h^1w>> H^1(\X,\mu_\ell) @>>> \Pic(\X)[\ell] \\
		@VVV @. @VVV \\
		H^1(\Bbar,\A[\ell]) @>h^1w_{\kbar}>> H^1(\overline{\X},\mu_\ell) @>\sim>> \Pic(\overline{\X})[\ell], \\
	\end{CD}
	$$
	in which the second row is the $\kbar$-analogue of the first one, and both horizontal maps on the right are induced by Kummer theory. Since $\X$ is a projective geometrically integral curve, we have that $\Gm(\X\otimes_k \kbar)=\kbar^\times$ which is $\ell$-divisible, and it follows that the Kummer map is an isomorphism over $\kbar$. Furthermore, the map $h^1w_{\kbar}$ is injective by Lemma~\ref{lem:h1w-inj}, and the vertical map on the left is injective, since under our assumptions $H^1(\B,\A[\ell])=H^1(\Bbar,\A[\ell])^{\Gal(\kbar/k)}$ by Lemma~\ref{lem:H1GalInv}.
	
	It follows that the composite map $H^1(\B,\A[\ell]) \to H^1(\X,\mu_\ell) \to \Pic(\X)[\ell]$ is injective. The image of this map lands in the kernel of the norm, since this is the case for the map $h^1w$. This concludes the proof of the first statement.
	
	Finally, let us replace assumption (1) by (1'). The Weil pairing satisfies $e_\ell^\lambda(P,T)\cdot e_\ell^\lambda(-P,T)=1$ by bilinearity. Since the map 
$$
H^1(\B,\A[\ell]) \hookrightarrow \Pic(\X)[\ell]
$$
is induced by the Weil pairing with the generic point of $\X$, its vanishes when one projects it, via the norm, to the quotient of $\X$ by the involution $P\mapsto -P$. Hence the result.
\end{proof}

\begin{lemma}
	\label{lem:h1w-inj}
	Under the assumptions of Lemma~\ref{lem:H1picard}, the map $h^1w_{\kbar}$ is injective.
\end{lemma}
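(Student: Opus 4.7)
The plan is to analyze the cokernel $\mathcal{C}$ of the sheaf monomorphism $w : \A[\ell] \hookrightarrow \Res_{\X/\B} \mu_\ell = f_* \mu_\ell$ on $\Bbar$ (where $f : \X \to \B$ is the structural morphism) and to deduce the injectivity of $h^1 w_{\kbar}$ from the associated long exact sequence of \'etale cohomology.

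First I would compute the relevant cohomology of $\Res_{\X/\B} \mu_\ell$. Since $f$ is finite, $R^i f_* \mu_\ell = 0$ for $i \geq 1$, so the Leray spectral sequence collapses and $H^i(\Bbar, f_* \mu_\ell) = H^i(\overline{\X}, \mu_\ell)$; using that $\overline{\X}$ is connected (a consequence of $X$ being $\kbar(\B)$-irreducible) together with Kummer theory, this gives $\F_\ell$ in degree $0$ and $\Pic(\overline{\X})[\ell]$ in degree $1$. Combined with $H^0(\Bbar, \A[\ell]) = A[\ell]^{\Gal(\overline{\kbar(\B)}/\kbar(\B))} = 0$ (via the N\'eron mapping property and assumption (2) of Lemma~\ref{lem:H1picard}), the long exact sequence attached to $0 \to \A[\ell] \to f_* \mu_\ell \to \mathcal{C} \to 0$ becomes
\[
0 \to \F_\ell \to H^0(\Bbar, \mathcal{C}) \to H^1(\Bbar, \A[\ell]) \xrightarrow{h^1 w_{\kbar}} \Pic(\overline{\X})[\ell],
\]
reducing the lemma to the statement that $\dim_{\F_\ell} H^0(\Bbar, \mathcal{C}) = 1$.

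I would then pass to the generic fiber. Setting $G = \Gal(\Kell / \kbar(\B))$ and letting $H \leq G$ be the stabilizer of a chosen point $T \in X$ (so that $X \cong G/H$ as $G$-sets), the generic fiber of $\mathcal{C}$ is the cokernel of the injection $A[\ell] \hookrightarrow \mathrm{Ind}_H^G \F_\ell$ of $G$-modules (using $\kbar \supset \mu_\ell$ to identify $\mu_\ell$ with the trivial $G$-module $\F_\ell$). Shapiro's lemma gives $(\mathrm{Ind}_H^G \F_\ell)^G = \F_\ell$, and coupled with $A[\ell]^G = 0$ (assumption (2)) and $H^1(G, A[\ell]) = 0$ (assumption (3)), the associated long exact sequence of $G$-cohomology yields $\mathcal{C}_\eta^G = \F_\ell$.

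The main obstacle is bridging the global computation on $\Bbar$ with the generic fiber computation, i.e., showing that the restriction map $H^0(\Bbar, \mathcal{C}) \to \mathcal{C}_\eta^G$ is injective. I expect this to follow from the fact that both $\A[\ell]$ and $f_* \mu_\ell$ are \'etale sheaves on $\Bbar$ whose stalks at closed points embed into their generic fibers (for $f_* \mu_\ell$ this uses finiteness of $f$; for $\A[\ell]$ one uses that it is quasi-finite \'etale and that $\ell$ is invertible on $\Bbar$), so that no global section of $\mathcal{C}$ can be supported on a proper closed subscheme of $\Bbar$; this is cleanest to formulate using the Leray spectral sequence for the inclusion of the generic point. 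Combining this with the previously established inclusion $\F_\ell \hookrightarrow H^0(\Bbar, \mathcal{C})$ yields $H^0(\Bbar, \mathcal{C}) = \F_\ell$ and completes the proof.
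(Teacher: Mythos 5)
Your proposal is correct, and it runs on exactly the same fuel as the paper's proof---the exact sequence $0\to A[\ell]\xrightarrow{w}\Res_{k(X)/k(\B)}\mu_\ell\to Q\to 0$, Shapiro's lemma, assumption (3) in the form $H^1(G,A[\ell])=0$, and the N\'eron mapping property---but it arranges them in a different order. The paper descends to the generic point immediately, at the level of $H^1$: since $\A[\ell]=j_*A[\ell]$ (where $j$ is the inclusion of the generic point), the restriction $H^1(\Bbar,\A[\ell])\to H^1(\kbar(\B),A[\ell])$ is injective, and the rest is pure Galois cohomology, where injectivity of $H^1(\kbar(\B),A[\ell])\to H^1(\kbar(X),\mu_\ell)$ is equivalent to surjectivity of $\mu_\ell(\kbar(X))\to Q(\kbar(\B))$, which follows from $H^1(G,A[\ell])=0$. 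You instead stay on $\Bbar$, form the sheaf cokernel $\mathcal{C}$, and show $\dim_{\F_\ell}H^0(\Bbar,\mathcal{C})=1$ by sandwiching it between $H^0(\Bbar,f_*\mu_\ell)\simeq\F_\ell$ and $\mathcal{C}_\eta^G\simeq\F_\ell$; your computation of $\mathcal{C}_\eta^G$ is the paper's surjectivity statement in disguise (both are the four-term sequence of $G$-invariants terminating in $H^1(G,A[\ell])=0$). So the descent to the generic fiber happens at $H^0(\mathcal{C})$ rather than at $H^1(\A[\ell])$, at the cost of an extra bridging lemma; the paper's arrangement avoids that lemma entirely, which is what makes its proof shorter.

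That bridging step is the one place where your stated justification is weaker than what you actually need. Knowing only that the stalks of $\A[\ell]$ and $f_*\mu_\ell$ at closed points \emph{embed} into their generic fibers does not exclude skyscraper sections of the cokernel: for instance, if $F_2=j_*F_{2,\eta}$ and $F_1\subset F_2$ is a subsheaf whose stalk at some closed point $v$ is strictly smaller than $F_{1,\eta}^{I_v}$, both stalks still embed into the generic fibers, yet $\mathcal{C}=F_2/F_1$ acquires a skyscraper contribution at $v$ and $H^0(\Bbar,\mathcal{C})\to\mathcal{C}_\eta$ fails to be injective. What is needed, and what holds here, is the full $j_*$-description of the stalks: by the N\'eron mapping property, $\A[\ell]=j_*A[\ell]$ as an \'etale sheaf, so $\A[\ell]_{\bar v}=A[\ell]^{I_v}$, and similarly $(f_*\mu_\ell)_{\bar v}=\bigl(\Res_{k(X)/k(\B)}\mu_\ell\bigr)^{I_v}$ by normality of $\X$. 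Then the stalk of $\mathcal{C}$ at $\bar v$ is $\bigl(\Res\mu_\ell\bigr)^{I_v}/A[\ell]^{I_v}$, which injects into $Q^{I_v}$ because the kernel of $\bigl(\Res\mu_\ell\bigr)^{I_v}\to Q$ equals $A[\ell]\cap\bigl(\Res\mu_\ell\bigr)^{I_v}=A[\ell]^{I_v}$; hence $\mathcal{C}\hookrightarrow j_*\mathcal{C}_\eta$ and the desired injectivity of $H^0(\Bbar,\mathcal{C})\to\mathcal{C}_\eta^G$ follows. Your parenthetical appeal to ``quasi-finite \'etale and $\ell$ invertible'' should be replaced by this N\'eron-model argument, which is precisely the fact the paper invokes in its opening reduction (``Since $\A[\ell]$ is a N\'eron model\dots''); with that substitution your proof is complete.
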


\begin{proof}
	To shorten the notation, we assume that $k=\kbar$. Since $\A[\ell]$ is a Néron model, the restriction to the generic fiber map $H^1(\B,\A[\ell])\to H^1(k(\B),A[\ell])$ is injective. Hence it suffices to prove that the map
	$$
	H^1(k(\B),A[\ell]) \rightarrow H^1(k(X),\mu_\ell)
	$$
	is injective, where the groups involved are computed in Galois cohomology.
	
	For that purpose, we consider the exact sequence of $k(\B)$-finite group schemes (equivalently, of Galois modules over $k(\B)$)
	\begin{equation}\label{Galois-Exact-Sequence}
	\begin{CD}
		0 @>>> A[\ell] @>w>> \Res_{k(X)/k(\B)} \mu_\ell @>>> Q @>>> 0 \\
	\end{CD}
	\end{equation}
	where $w$ is the map in \eqref{eq:w:A[ell]} and $Q$ is the cokernel of $w$. By considering the action of the absolute Galois group of $k(\B)$, we obtain a long exact sequence of cohomology, whose first terms are
$$
0 \to A[\ell](k(\B)) \to \mu_\ell(k(X)) \to Q(k(\B)) \to H^1(k(\B),A[\ell]) \to H^1(k(X),\mu_\ell).
$$
The last map is injective if and only if the map $\mu_\ell(k(X)) \to Q(k(\B))$ is surjective. By considering the same exact sequence in the category of $\Gal(\Kell/k(\B))$-modules, and applying Galois cohomology again, we see that a sufficient condition for the surjectivity of $\mu_\ell(k(X)) \to Q(k(\B))$
is the vanishing of $H^1(\Gal(\Kell/k(\B)),A[\ell])$, which holds by assumption.
\end{proof}

\begin{lemma}
	\label{lem:h1=0}
	If $\ell$ is odd and $\Gal(\kbar\Kell/\kbar(\B))=\SP_{2d_A}(\F_\ell)$ then the assumptions (1'), (2) and (3) of Lemma~\ref{lem:H1picard} are satisfied by $X=A[\ell]\setminus\{0\}$.
\end{lemma}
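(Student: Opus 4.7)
The plan is to verify conditions (1'), (2), (3) separately, exploiting the explicit identification $G := \Gal(\kbar\Kell/\kbar(\B)) = \SP_{2d_A}(\F_\ell)$ acting on $V := A[\ell] \cong \F_\ell^{2d_A}$ via the natural symplectic representation.

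For (1'), the set $X = A[\ell]\setminus\{0\}$ is manifestly stable under $P \mapsto -P$ and generates $A[\ell]$. In order to apply Lemma~\ref{lem:H1picard} one additionally needs $X$ to be irreducible over $\kbar(\B)$; this reduces to the well-known transitivity of $\SP_{2d_A}(\F_\ell)$ on $V \setminus \{0\}$, which I would recall briefly: any nonzero vector can be completed to a symplectic basis by nondegeneracy, and any two symplectic bases are exchanged by a unique symplectic transformation.

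Condition (2) is then immediate from the same transitivity: the only $G$-fixed vector in $V$ is $0$, hence $A[\ell]^G = A[\ell](\kbar(\B)) = 0$.

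The main step is (3), the vanishing of $H^1(G, V)$, and this is where I would be most careful. The approach is the classical central-element argument: the scalar matrix $-I$ lies in $Z(G)$ (this is the only place where the hypothesis $\ell$ odd enters nontrivially, as otherwise $-I = I$) and acts on $V$ as $-\mathrm{id}_V$. By the standard fact that, for any $g \in G$, the pair consisting of conjugation by $g$ on $G$ and the action of $g$ on coefficients induces the identity on $H^*(G,V)$, a central element $c$ acts on $H^*(G,V)$ purely through its action on $V$, and that induced action is the identity. Applying this to $c = -I$ gives $-1 = 1$ on $H^1(G, V)$, so $2 \cdot H^1(G, V) = 0$; since $\ell$ is odd, $2$ is a unit in $\F_\ell$, forcing $H^1(G, V) = 0$. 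I do not expect any substantive obstacle: the whole argument is formal once the transitivity of $\SP_{2d_A}(\F_\ell)$ on $V\setminus\{0\}$ and the centrality of $-I$ are in hand.
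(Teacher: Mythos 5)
Your proposal is correct, and for conditions (1') and (2) it follows the paper exactly: irreducibility of $X=A[\ell]\setminus\{0\}$ over $\kbar(\B)$ and the vanishing of $A[\ell]^G$ both come from the transitivity of $\SP_{2d_A}(\F_\ell)$ on $\F_\ell^{2d_A}\setminus\{0\}$, just as in the paper's proof. Where you genuinely diverge is the main step (3), the vanishing of $H^1(\SP_{2d_A}(\F_\ell),\F_\ell^{2d_A})$. The paper runs the inflation--restriction sequence for the central subgroup $\{\pm I\}$ with quotient $N=\SP_{2d_A}(\F_\ell)/\{\pm I\}$: the inflation term $H^1(N,(\F_\ell^{2d_A})^{\{\pm I\}})$ dies because $\ell$ is odd forces $(\F_\ell^{2d_A})^{\{\pm I\}}=0$, and the restriction term $H^1(\{\pm I\},\F_\ell^{2d_A})$ dies because $|\{\pm I\}|=2$ is coprime to $\ell$. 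You instead invoke the classical ``center kills'' argument: since conjugation by any $g\in G$ together with the action of $g$ on coefficients induces the identity on $H^*(G,V)$, the central element $-I$ acts on $H^1(G,V)$ purely through its coefficient action $-\mathrm{id}_V$, forcing $2\cdot H^1(G,V)=0$, hence $H^1(G,V)=0$ as this is an $\F_\ell$-vector space with $\ell$ odd. Both arguments rest on the same two inputs (centrality of $-I$ and oddness of $\ell$), but yours is slightly slicker and more general --- it kills $H^n(G,V)$ for all $n\geq 1$ in one stroke, with no case analysis of the two terms of a spectral-sequence edge --- while the paper's version is self-contained at the level of low-degree exact sequences and makes the two uses of the hypothesis $\ell$ odd (fixed points and coprimality) separately visible. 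One small point of care on your side: the coefficient map $-\mathrm{id}_V$ is a $G$-module homomorphism precisely because $-I$ is central, which is what legitimizes applying functoriality in the module; you gesture at this correctly, and no gap remains.
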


\begin{proof}
For simplicity we let $d:=d_A$.
Since the group $\SP_{2d}(\F_\ell)$ acts transitively on $\F_\ell^{2d} \setminus \{0\}$, the subset $X=A[\ell]\setminus\{0\}$ is $\kbar(\B)$-irreducible. It remains to prove that $H^1(\SP_{2d}(\F_\ell),\F_\ell^{2d})=0$. For that, we observe that the subgroup $\{\pm I\}$ is in the center of $\SP_{2d}(\F_\ell)$, and, letting $N=\SP_{2d}(\F_\ell)/\{\pm I\}$, the corresponding inflation-restriction exact sequence reads
$$
0\to H^1(N,(\F_\ell^{2d})^{\{\pm I\}}) \to H^1(\SP_{2d}(\F_\ell),\F_\ell^{2d})) \to H^1(\{\pm I\},\F_\ell^{2d})^N \to \cdots
$$
Since $\ell$ is odd, it is clear that $(\F_\ell^{2d})^{\{\pm I\}}=0$, hence the first cohomology group vanishes. The third one also, since the orders of $\{\pm I\}$ and $\F_\ell^{2d}$ are coprime. This implies the vanishing of the cohomology group in the middle, as required.
\end{proof}

\begin{proof}[Proof of Theorem~\ref{thm:big_monodromy}]
According to Lemma~\ref{lem:h1=0}, the assumptions (1'), (2) and (3) of Lemma~\ref{lem:H1picard} are satisfied by the set $X=A[\ell]\setminus\{0\}$. The result follows.
\end{proof}


\subsection{Proof of Theorem~\ref{thm:jacobian}}
\label{subsec:hyperelliptic}


We now work under the assumptions of Theorem~\ref{thm:jacobian}.

Since $f$ has odd degree, the curve $C$ defined by the equation $y^2=f(x)$ has a unique point at infinity, which we denote by $\infty$. Let $X$ be the finite $k(\B)$-scheme defined by $f(x)=0$. By abuse of notation, we identify a point of $X$, i.e. a root $x_0$ of $f$, with the $2$-torsion point $(x_0,0)-\infty$ on $J$. It is a classical fact that these points generate $J[2]$. When $\deg(f)>3$, this set of points is strictly contained in $J[2]\setminus \{0\}$.

Under the assumptions of Theorem~\ref{thm:jacobian}, the trace of $J$ vanishes. Indeed, if the trace were nonzero, say of dimension $b>0$, then its $2$-torsion subgroup over $\kbar$ would be isomorphic to $(\Z/2\Z)^{2b}$ (since $\car(k)\neq 2$), hence $J[2]$ would have nonzero rational point over $\kbar(\B)$, equivalently $f$ would have roots defined over $\kbar(B)$, which contradicts the assumption of the Theorem that $f$ is irreducible over $\kbar(B)$.
For the same reason, the curve $\X$ is geometrically integral and its generic fiber $X$ generates $J[2]$. Moreover, the sum of all points of the generic fiber of $\X$ is zero. Therefore, the map $w$ induced by the Weil pairing \eqref{eq:w:A[ell]} is a closed embedding and factors as
\begin{equation}\label{Map_Induced_by_Weil_Pairing}
w: J[2] \to \ker(N_{k(X)/k(\B)}:\Res_{k(X)/k(\B)} \mu_2 \to \mu_2).
\end{equation}

Since $\car(k)\neq 2$, these two group schemes are finite \'etale. Let us compare their ranks: on the left-hand side, $J[2]$ has rank $2^{2g(C)}$ where $g(C)=\frac{\deg(f)-1}{2}$ is the genus of $C$. On the right-hand side, $\Res_{k(X)/k(\B)} \mu_2$ has rank $2^{\deg(f)}$, and the norm $N_{k(X)/k(\B)}$ is surjective since $[k(X):k(\B)]=\deg(f)$ is odd, hence the kernel of the norm has rank $2^{\deg(f)-1}$. To conclude, the ranks are the same, hence $w$ is an isomorphism.
	
	In fact, we have an exact sequence (of sheaves on the étale site of $\B$) between N{\'e}ron models of finite group schemes
	\begin{equation}
		\label{eq:A[ell]normkern}
		\begin{CD}
			0 @>>> \J[2] @>>> \Res_{\X/\B} \mu_2 @>N_{\X/\B}>> \mu_2 @>>> 0 \\
		\end{CD}
	\end{equation}
	(note that these N{\'e}ron models need not be finite themselves). Since N{\'e}ron models do not behave nicely with respect to exact sequences in general, the exactness of \eqref{eq:A[ell]normkern} requires detailed explanation: the left-exactness is because Néron models are direct images of their generic fibers on the smooth site, and the direct image functor is left-exact, so the sequence is left-exact on the smooth site, hence in particular on the étale site. The right-exactness is because the degree of $\X\to \B$ is odd. For the same reason, the map
	$$
	N_{\X/\B} : H^i(\B,\Res_{\X/\B} \mu_2)=H^i(\X,\mu_2) \rightarrow H^i(\B,\mu_2)
	$$
	is surjective for all $i\geq 0$. Hence from the long exact sequence attached to \eqref{eq:A[ell]normkern} we obtain short exact sequences for all $i\geq 0$:
	$$
	\begin{CD}
		0 @>>> H^i(\B,\J[2]) @>>> H^i(\X,\mu_2) @>N_{\X/\B}>> H^i(\B,\mu_2) @>>> 0. \\
	\end{CD}
	$$
	We deduce by the same argument as in the proof of Lemma~\ref{lem:H1picard} that $H^0(\B,\J[2])=0$, and that
	$$
	H^1(\B,\J[2]) = \ker(N_{\X/\B}:\Pic(\X)[2] \to \Pic(\B)[2]).
	$$
	The same holds over $\kbar$, and $J$ is self-dual, hence the assumptions of Theorem~\ref{thm:main} are satisfied, from which the result follows.

\begin{proof}[Proof of Proposition~\ref{explicitdescent}]
	The proof follows the same lines as \cite[Proposition~4.1]{GHL23}. According to Schaefer \cite[Theorem~1.2]{Schaefer1995}, the composition of the cohomological maps
	$$
	\begin{CD}
		J(k(\B))/2J(k(\B)) @>\delta>> H^1(k(\B),J[2]) @>h^1\omega>> H^1(k(\X),\mu_2), \\
	\end{CD}
	$$
	(where $\delta$ is the usual Kummer map) can be described as
	\begin{align*}
		J(k(\B))/2J(k(\B)) &\rightarrow k(\X)^\times/(k(\X)^\times)^2\simeq H^1(k(\X),\mu_2) \\
		[D] &\mapsto a_D
	\end{align*}
	where $a_D$ is the $x$-coordinate polynomial in the Mumford representation of $D$.
	
	Going through the proof of Lemma~\ref{lem:l-descent}, one can check that the Kummer map $\delta$ has values in the subgroup $H^1(\B\setminus \Sigma_2,\J[2])$, where $\Sigma_2\subset \B$ is of the set of $v\in\Sigma$ for which $(\Phi_v/2\Phi_v)(k_v)\neq 0$. It follows that the map $h^1\omega\circ \delta$ has values in $H^1(\X\setminus \Sigma_2^\X,\mu_2)$.
	We finally observe that $\phi_2$ is the composition of $h^1\omega\circ \delta$ with the natural morphism
	\begin{equation*}
		\begin{split}
			\kappa_2: H^1(\X\setminus \Sigma_2^\X, \mu_2) &\rightarrow \Pic(\X, \Q.\Sigma_2^\X)[2] \\
			h\in k(\X)^\times &\mapsto \frac{1}{2}\divisor(h).
		\end{split}
	\end{equation*}
	Therefore, $\phi_2$ is a morphism of groups. The injectivity of $\phi_2$ follows from the injectivity of $\delta$, the injectivity of $h^1\omega$ (Lemma~\ref{lem:h1w-inj}), and the elementary fact \cite[\S{}2]{GHL23} that the kernel of $\kappa_2$ is $k^\times/(k^\times)^2$, which does not contribute to the kernel of the norm map on the $H^1$'s, hence intersects trivially the image of $h^1\omega$.
\end{proof}


\subsection{Proof of Theorem~\ref{theoremd5}}

The main idea in the proof of Theorem~\ref{theoremd5}~(i) is to view the surface defined by $y^2=g(t)+x^5$ as a fibration in two different ways, by projecting onto the $x$-coordinate or the $t$-coordinate (yielding an elliptic and genus $2$ fibration, respectively). We then make a comparison between the two fibrations, taking advantage of well-known results in the theory of elliptic surfaces.
We begin with a proof of (ii).

\begin{proof}[Proof of Theorem~\ref{theoremd5}~(ii)]
We will construct a bijection between $R$ and a related set of points $R'$ on the elliptic curve $E'$ over $k(x)$ given by $y^2=g(t)+x^5$. We first note that the discriminant of $E'$ is a quadratic polynomial in $x^5$ not divisible by $x$ (since $g$ is squarefree). Then the discriminant of $E'$ has degree $10$, and factors of multiplicity at most $2$, from which it follows that the associated elliptic surface is a rational elliptic surface with all fibers irreducible.  In this case, it is known that the associated Mordell-Weil lattice is $E_8$ and there is a set $R'$ of $240$ points of the form $(t(x),y(x))\in E'(\kbar(x))$ with $t(x),y(x)\in \kbar[x]$ and $\deg t(x)\leq 2$, $\deg y(x)\leq 3$, corresponding to the $240$ root vectors in $E_8$ \cite[\S{}10]{Shioda90}.

As noted near the beginning of the proof of Theorem~\ref{theoremd5}~(i), the set $\Sigma_2^\X$ is empty.  Therefore, the $2$-descent map of Proposition~\ref{explicitdescent} can be described as
	\begin{align*}
		E'(\kbar(x))/2E'(\kbar(x)) &\rightarrow \Jac(\mathcal{X})[2](\kbar) \\
		(t(x),y(x)) &\mapsto \frac{1}{2}\divisor(t-t(x)).
	\end{align*}

Also, since $K_{\mathcal{X}}\sim 6\infty$ (see the remarks before Theorem \ref{theoremd5}), we have a bijective map 
\begin{align*}
	\Jac(\mathcal{X})[2](\kbar)&\to \{\text{theta characteristics of $\mathcal{X}$}\}\\
	[D]&\mapsto [D+3\infty].
\end{align*}
We can compose the above $2$-descent map with this latter map, and the restriction of the $2$-descent map to $R'$ can be seen to be $2$-to-$1$ from the description as root vectors in $E_8$ (for root vectors $u,v\in E_8$, it is easy to see that $u-v\in 2E_8$ if and only if $u=\pm v$). From the form of the points in $R'$ it follows that $\frac{1}{2}\divisor(t-t(x))+3\infty$ is effective. Now, the effective theta characteristics on the genus $4$ curve $\mathcal{X}$ consist of the odd theta characteristics along with the (unique) vanishing even theta characteristic $3\infty$ (see \cite[Theorem~2.8.8]{Kulkarni} and \cite[\S{}2.8]{Vakil}). Therefore, the image of $R'$ via the above composite map is precisely the set of $120$ odd theta characteristics of $\mathcal{X}$.  From the form of $E'$ one can see that, for each point $(t(x),y(x))\in R'$, we have $\deg t(x)=2$ and $\deg y(x)=3$.

Next, we construct a natural bijection between $R'$ and $R$.  Given $(t(x),y(x))\in R'$, let $a_D$ be the monic quadratic polynomial in $x$ of the form $x^2+bx+c(t), b\in \kbar, c(t)\in \kbar[t], \deg c(t)=1$, obtained by rescaling $t(x)-t$ (to be monic in $x$). Then the zeroes of $a_D=0$ determine the $x$-coordinates of two points $P,P'$ and the $y$-coordinates of $P,P'$ are determined by the equation $y=y(x)$, yielding the divisor class $[P+P'-2\infty]\in R$. Conversely, given $[D]\in R$, let $a_D=x^2+bx+c(t)$ be as in the definition of $R$, and let $b_D(t,x)$ be the other polynomial in the Mumford representation of $[D]$, defining appropriate $y$-coordinates. Since $a_D$ is linear in $t$, we can solve for $t$ obtaining $t=t(x)$, $\deg t(x)=2$, and substitute to find $y(x)=b_D(t(x),x)$. Then $(t(x),y(x))\in R'$.

Finally, the composite map $R\to R'\to  \{\text{odd theta characteristics of $\mathcal{X}$}\}$ clearly induces the map of Theorem~\ref{theoremd5}~(ii)  when $k=\kbar$, and we deduce the general result when $k$ is not algebraically closed by noting that the map appropriately respects the natural Galois action.
\end{proof}

\begin{proof}[Proof of Theorem~\ref{theoremd5}~(i)]
Let $X\to\mathbb{P}^1$ be the relatively minimal fibration associated to $C$, induced by $(x,y,t)\mapsto t$, whose general fiber is a smooth projective curve of genus $2$.  Using Liu's algorithm \cite{Liu94} and the Namikawa-Ueno classification \cite{NU73}, we find that the singular fibers consist of irreducible cuspidal curves (type [VIII-1]), one for each root of $g(t)$, and the singular fiber over $\infty$, which, by an appropriate change of coordinates, is seen to be of type [VIII-3]. The singular fiber over $\infty$ is given by $2B+3\Gamma_1+2\Gamma_2+\Gamma_3$, where all the curves are rational, and we have $B^2=-3$, $\Gamma_i^2=-2$, $i=1,2,3$, $B.\Gamma_1=2$, $\Gamma_1.\Gamma_2=1$, $\Gamma_2.\Gamma_3=1$, and all other intersection numbers are $0$. In particular, at each place of bad reduction the component group of the N\'eron model of $J$ is trivial, and thus $\Sigma_2^\X$ is empty. We now show that $\rk  J(\kbar(t))=8$, and moreover the Mordell-Weil lattice associated to $J(\kbar(t))$ is the $E_8$ lattice. We remark that since $g(\mathcal{X})=4$, it follows from Proposition \ref{explicitdescent} that $\rk J(\kbar(t))\leq 2g(\mathcal{X})=8$, and therefore in this case the $2$-descent bound of Proposition \ref{explicitdescent} (or Theorem \ref{thm:jacobian}) is sharp.

First, we give an explicit description of $X$. As is well known, the relatively minimal rational elliptic surface $X'$ associated to $E'$ can be obtained as the blowup of $\mathbb{P}^2$ at $9$ points $P_1,\ldots, P_9$, with associated exceptional divisors $E_1,\ldots, E_9$, where $E_9$ is the identity section and $E_1,\ldots, E_8$ give sections (over $\kbar$) corresponding to points $(t(x),y(x))\in E'(\kbar(x))$ with $t(x),y(x)\in \kbar[x]$ and $\deg t(x)=2$, $\deg y(x)=3$. The surfaces $X$ and $X'$ are birational, and we let $F$ be defined by $t=t_0$ in $X'$, corresponding to a fiber of $X$. Then from their descriptions, $F$ intersects each section $E_1,\ldots, E_8$ in $2$ points (with multiplicity). Moreover, the fibers $F'$ of $X'\to \mathbb{P}^1$ correspond to members of a pencil $\Lambda$ of cubics through $P_1,\ldots, P_9$ and we also easily find $F'.F=2$ in $X'$. It follows that the image of $F$ via the map $X'\to\mathbb{P}^2$ is a sextic curve with double points at $P_1,\ldots, P_8$, and we are in Case (A) of \cite{Kitagawa10}. From the proofs of \cite[Theorem 3.1, Theorem 3.16]{Kitagawa10}, we can obtain $X$ from $X'$ by blowing down $O'=E_9$ to obtain a surface $Y$ and map $\pi':X'\to Y$, with the pencil of curves on $Y$ corresponding to $\Lambda$ now passing through a single point, and then blowing up $4$ (possibly infinitely near) points on $Y$ to obtain $X$ as the blowup $\pi:X\to Y$. From \cite{Kitagawa10} (or keeping track of the number of blowups and blowdowns), $X$ has Picard number $\rho(X)=13$, and by the Shioda-Tate formula,
		\begin{align*}
			\rk J(\kbar(t))=\rho(X)-2-\sum_v (m_v-1)=13-2-3=8,
		\end{align*}
where $m_v$ denotes the number of components in the fiber of $X\to\mathbb{P}^1$ over a closed point $v\in \mathbb{P}^1$. Thus $\rk  J(\kbar(t))=8$ as claimed.

We now derive the Mordell-Weil lattice structure on $J(\kbar(t))$. Let $T$ (respectively $T'$) be the trivial lattices in $\NS(X)$ (respectively in $\NS(X')$) generated by the irreducible components of fibers and the section $O$ (respectively $O'$) corresponding to the point at infinity on $C$ (respectively on $E'$). Then due to Shioda \cite[Theorem~2.1 and Lemma~2.3]{Shioda92b}, we have isomorphisms $J(\kbar(t))\cong \NS(X)/T$, $E'(\kbar(x))\cong \NS(X')/T'$, and corresponding ``splitting'' homomorphisms $\phi:J(\kbar(t))\to \NS(X)\otimes \mathbb{Q}$ and $\phi':E'(\kbar(x))\to \NS(X')\otimes \mathbb{Q}$ which yield the Mordell-Weil lattice structures via intersection theory (e.g., $\langle P,Q\rangle=-(\phi(P).\phi(Q))$, $P,Q\in J(\kbar(t))$). The map $\phi$ is characterized by the properties
	\begin{align*}
		\phi(P)\equiv D_P \pmod{T\otimes \mathbb{Q}}, \quad \phi(P)\perp T,
	\end{align*}
	where $D_P$ is a horizontal divisor on $X$ corresponding to $P$. A similar statement holds for $\phi'$.
	
	Let $P\in R, P'\in R'$ be elements that correspond under the natural bijection $R\to R'$. We now compare $\phi(P)$ and $\phi'(P')$. Let $D_{P'}$ be the section of $X'\to\mathbb{P}^1$ corresponding to $P'$, let $O'$ be the identity section of $X'$, and let $F'$ be a fiber. Since every singular fiber of $X'$ is irreducible, we have the formula \cite[p.~108]{SchSch10}
	\begin{align*}
		\phi'(P')=D_{P'}-O'-(D_{P'}.O'+\chi(X'))F'=D_{P'}-O'-F',
	\end{align*}
	using that $D_{P'}.O'=0$ and $\chi(X')=1$. Since $D_{P'}$ does not intersect $O'$, we identify $D_{P'}$ with its image in $Y$. Let $F''=\pi'(F')$ be the image in $Y$ of a fiber $F'$ on $X'$. Then since $X'$ is obtained as a blowup at a point in $F''$, we find $\phi'(P')=\pi'^*(D_{P'}-F'')$.

We claim that
		\begin{align}
			\label{phiformula}
			\phi(P)=\pi^*(D_{P'}-F'')=\pi^*(D_{P'})-\pi^*(F'').
		\end{align}
Since the divisor $\pi^*(D_{P'})$ is associated to $\phi(P)$, we need only show that $\pi^*F''$ is a linear combination of the identity section $O$ and fiber components, and that $\pi^*(D_{P'})-\pi^*(F'')$ is orthogonal to $O$ and to every fiber component. For ease of notation, we identify $E_i$ with the divisor $\pi^*(\pi'_*(E_i))$ on $X$, $i=1,\ldots, 8$, and we let $E_{10},\ldots, E_{13}$ be the exceptional divisors in the four successive blowups to obtain $X$ from $Y$ (identified with their pullback to $X$). We let $\ell$ be any divisor in the class of (the pullback of) a line on $\mathbb{P}_k^2$.  Then with an appropriate ordering, a calculation shows that
	\begin{align*}
		B&\sim 3\ell-\sum_{i=1}^8E_i-2E_{10},\\
		\Gamma_i&\sim E_{9+i}-E_{10+i} \text{ for } i=1,2,3 \text{ and}\\
		O&=E_{13}.
	\end{align*}
	It follows that
	\begin{align*}
		\pi^*F''\sim 3\ell-\sum_{i=1}^8E_i\sim B+2\Gamma_1+2\Gamma_2+2\Gamma_3+2O.
	\end{align*}
	Now we easily find $D_{P'}.F''=(\pi^*D_{P'}).B=(\pi^*F''.B)=1$, $(\pi^*D_{P'}).O=(\pi^*F'').O=0$, and $(\pi^*D_{P'}).\Gamma_i=(\pi^*F'').\Gamma_i=0$ for $i=1,2,3$. Then the required orthogonality for $\pi^*(D_{P'}-F'')$ holds and equation \eqref{phiformula} follows.

Since birational pullbacks preserve intersection numbers, we find that
	\begin{align*}
		\langle P,Q\rangle=-(\phi(P).\phi(Q))=-(\phi'(P').\phi'(Q'))=\langle P',Q'\rangle,
	\end{align*}
	for corresponding pairs $(P,Q)\in R^2$ and $(P',Q')\in R'^2$. Moreover, it follows from the above calculations that $\NS(X)/T\cong \NS(X')/T'$, hence $J(\kbar(t))\cong  E'(\kbar(x))\cong \mathbb{Z}^8$ (in particular, $J(\kbar(t))$ is torsion-free). Since $R'$ generates the Mordell-Weil lattice of $X'$ and it is the $E_8$ lattice, it follows that $R$ generates the Mordell-Weil lattice of $X$ and it is the $E_8$ lattice. 
\end{proof}

\begin{proof}[Proof of Theorem~\ref{theoremd5}~(iii)]
We first assume $k=\kbar$ is algebraically closed. From the shape of the equation of $C$, we see that its Jacobian $J$ contains a cyclic subgroup of order $5$, generated by a divisor class of the form $[P-\infty]$ with $x(P)=0$. This defines a $5$-isogeny $J\to J'$ for some $J'$. By a construction similar to that of Proposition~\ref{explicitdescent}, one can describe the descent map with respect to the dual $5$-isogeny $J'\to J$ as follows:
	\begin{align*}
		R&\to E[5](k)\setminus \{0\},\\
		[D]&\mapsto \frac{1}{5}\divisor(c_D).
	\end{align*}
	
	Note that the target of this descent map consists of classical divisor classes (no rational coefficients involved), since as noted near the beginning of the proof of Theorem~\ref{theoremd5}~(i), the component group of the N\'eron model of $J$ is trivial.

We shall describe this $10$-to-$1$ map by explicitly giving the fiber of this map over a point of $E[5](k)\setminus \{0\}$. Equivalently, we explicitly give the points of $R'$ corresponding to points in such a fiber, as it is slightly easier to verify the calculations in this case. Since the field $k$ is algebraically closed, we may assume, after a change of variables, that $g(t)$ and $E$ are given by the equation
	\begin{equation*}
		\begin{split}
		E:y^2=g(t) &=t^3 + (-27u^4 + 324u^3 - 378u^2 - 324u -27)t\\
		&+ (54u^6 - 972u^5 + 4050u^4 + 4050u^2 + 972u + 54)
		\end{split}
	\end{equation*}
	for some parameter $u\in k$,
	with $5$-torsion point $(3u^2 - 18u + 3, -108u)$; this is obtained by putting the universal elliptic curve over $X_1(5)$ in Weierstrass form \cite[Table 3, Entry 13]{Kubert76}. Let $w$ be an algebraic number satisfying
	\begin{align*}
		w^5=1296\frac{5\sqrt{5}+11}{2u -5\sqrt{5}- 11}.
	\end{align*} 
	Then by explicit calculation we find the point $(t(x),y(x))\in R'$ on $E'$ where
	\begin{align*}
		t(x)&=\frac{1}{72}(3-\sqrt{5})w^2x^2 + wx + 3u^2 - 18u + 3,\\
		y(x)&=\frac{1}{216}(2-\sqrt{5})w^3x^3 + \frac{1}{24}((\sqrt{5} - 3)u + 7-\sqrt{5})w^2x^2 - (3u -3)wx - 108u,
	\end{align*} 
	and one verifies (using Magma \cite{Magma}) that the corresponding point in $R$ maps to the given $5$-torsion point. We have $5$ choices of the root $w$, leading to $5$ such points in $R'$ (or $R$), and changing $\sqrt{5}$ to $\sqrt{-5}$ everywhere yields $5$ more points. Since $|R|=240$ and $|E[5](k)\setminus \{0\}|=24$, we see that the map $R\to E[5](k)\setminus \{0\}$ must be exactly $10$-to-$1$.
	
	Finally, we deduce the analogous result when $k$ is not algebraically closed as the relevant map appropriately respects the natural Galois action.
\end{proof}


\subsection{Proof of Theorem~\ref{theoremd6}}

Since the techniques are similar to the ones in the proof of Theorem~\ref{theoremd5}, and the primary role of Theorem~\ref{theoremd6} is to illustrate the different behavior that can occur in the even case, we only sketch some details of the proof.

\begin{proof}[Proof of Theorem~\ref{theoremd6}]
We first give a quick proof that $J(\kbar(t))\cong \mathbb{Z}^6\oplus \mathbb{Z}/3\mathbb{Z}$. We exploit the well-known fact that the curve $C$ is bielliptic. Define the two elliptic curves over $k(t)$:
$$
	E_1:y^2 = x^3+g(t) \quad\text{and}\quad E_2:y^2 = x^3+g(t)^2.
$$
Then $C$ admits the two maps (both of degree $2$):
\begin{align*}
C &\to E_1, & C &\to E_2, \\
(x,y) &\mapsto (x^2,y) & (x,y) &\mapsto (g(t)/x^2,g(t)y/x^3),
\end{align*}
and this induces a $(2,2)$ isogeny between $J$ and $E_1\times E_2$. It follows from Oguiso-Shioda's classification of rational elliptic surfaces \cite[Main~Theorem]{OS91} (see also \cite[Table~2, Types (3,0,1,0,0), (0,3,0,0,0)]{GHL23}) that
\begin{align*}
	E_1(\kbar(t))&\cong \mathbb{Z}^4,\\
	E_2(\kbar(t))&\cong \mathbb{Z}^2\oplus \mathbb{Z}/3\mathbb{Z}.
\end{align*}
Since $x^6+g(t)$ is irreducible over $\kbar(t)$, it follows that $J(\kbar(t))$ has trivial $2$-torsion, and thus we find that 
\begin{align*}
	J(\kbar(t))\cong E_1(\kbar(t))\oplus E_2(\kbar(t))\cong \mathbb{Z}^6\oplus \mathbb{Z}/3\mathbb{Z}.
\end{align*}
Looking at the principal divisor associated to $y-x^3$, we find that $[\infty^--\infty^+]$ generates the $3$-torsion subgroup of $J(\kbar(t))$.

The relatively minimal fibration $X\to\mathbb{P}^1$ associated to $C$ has $3$ singular fibers of type [V] (one for each root of $g$) and one singular fiber of type [II] (over $\infty$) in the classification of \cite{NU73}. Let $E'$ be the elliptic curve over $\kbar(x)$ defined by $y^2=g(t)+x^6$, and let $X'$ the associated relatively minimal rational elliptic surface. As in the proof of Theorem~\ref{theoremd5}, $X$ can be obtained by blowing down $X'$ once and then blowing up $4$ times, and an explicit calculation (using Magma \cite{Magma}) of $\NS(X)/T$ (and relevant intersection pairings) shows that the Mordell-Weil lattice is $E_6^*$. 

Lastly, we consider the set $R$. As in the proof of Theorem~\ref{theoremd5}, one can show that the Mordell-Weil lattice associated to $E'(\kbar(x))$ is $E_8$, and there is a set $R'$ of $240$ points of the form $(t(x),y(x))\in E'(\kbar(x))$ with $t(x),y(x)\in \kbar[x]$ and $\deg t(x)\leq 2$, $\deg y(x)\leq 3$, corresponding to the $240$ root vectors in $E_8$. The sets $R$ and $R'$ are not in bijection, however, as the $6$ points $(\alpha, \pm x^3)\in E'(\kbar(x))$, where $\alpha$ is a root of $g(t)$, do not yield a point in $R$ (as the first coordinate is constant). Excluding these points from $R'$, essentially the same construction as in the proof of Theorem~\ref{theoremd5} gives a bijection and shows that $|R|=|R'|-6=234$. More precisely, identifying $R$ in $\NS(X)$ appropriately, one can explicitly compute the Mordell-Weil lattice pairing on elements of $R$ (using Magma \cite{Magma}) and verify the remaining claims of the theorem. 
\end{proof}

\begin{remark}
	\label{remdescent}
	It may be tempting to consider the map $\phi_2$ defined in Proposition~\ref{explicitdescent} for an arbitrary hyperelliptic curve, regardless of the degree of the polynomial $f$. In the number field case, it was observed by Cassels \cite[\S{}5]{cassels} that when $f$ has degree $6$ this map need not be the $2$-descent map. The same occurs in the function field case, as we shall illustrate.
	In the setting of Theorem~\ref{theoremd6}, let $\mathcal{X}$ be the smooth projective curve over $k$ defined by $x^6+g(t)=0$. In the notation of Proposition~\ref{explicitdescent}, we have $\Sigma_2^\X=\emptyset$, so the analogue of $\phi_2$ is the map
	\begin{align*}
		J(\kbar(t)) &\rightarrow \Jac(\mathcal{X})(\kbar)[2], \\
		[D] &\mapsto \frac{1}{2}\divisor(a_D(x)).
	\end{align*}
	When restricted to the set $R$, this map is $2$-to-$1$ and its image can be naturally identified with odd theta characteristics on $\mathcal{X}$, excluding the $3$ odd theta characteristics corresponding to the $6$ points excluded from $R'$ in the proof of Theorem~\ref{theoremd6}. On the other hand, the map $J(\kbar(t))\to J(\kbar(t))/2J(\kbar(t))$ is $6$-to-$1$ on the subset $R_1$ of $R$ from Theorem~\ref{theoremd6}. Thus, it is impossible for the map $\phi_2$ of Proposition~\ref{explicitdescent} to be the $2$-descent map in this case.
\end{remark}


\section*{Acknowledgments}
The first author acknowledges the support of the Pacific Institute for Mathematical Sciences (PIMS) while he was a PIMS postdoctoral fellow at the University of Lethbridge. The second author was supported in part by the CIMI Labex. The third author was supported in part by NSF grant DMS-2302298.





\bibliographystyle{amsalpha}
\bibliography{biblioRBF.bib}



\end{document}